\newif\ifFIRST\newdimen\MAXright\MAXright0pt
\def\dynkin{\mbox\bgroup\scriptsize\FIRSTtrue\hskip.5em\let\SMALL\relax%
\setbox1\hbox{$\diagup$}%
\setbox2\hbox{$\diagdown$}%
\setbox0\hbox to2\wd1{\hrulefill}%
\setbox8\hbox to.5\wd1{\hrulefill}%
\setbox3\hbox{$\bullet$}%
\setbox4\hbox{$\times$}%
\def\root##1{\ifFIRST\setbox5\hbox{$##1$}\ifdim\wd5>1.3em%
\hskip-.5em\hskip.5\wd5\fi\fi\FIRSTfalse%
\hskip-.25em\raise1.5\wd3\hbox to0pt{\hss\hskip.45em$%
##1$\hss}\copy3\hskip-.25em\setbox6\hbox{$##1$}%
\MAXright\wd6}%
\def\droot##1{\ifFIRST\setbox5\hbox{$##1$}\ifdim\wd5>1.3em%
\hskip-.5em\hskip.5\wd5\fi\fi\FIRSTfalse%
\hskip-.25em\lower1.8\wd3\hbox to0pt{\hss\hskip.45em$%
##1$\hss}\copy3\hskip-.25em\setbox6\hbox{$##1$}%
\MAXright\wd6}%
\def\rroot##1{\hskip-.25em\copy3\hbox to0pt{\hskip.3em$##1$\hss}%
\hskip-.25em\setbox6\hbox{\hskip.6em$##1##1$}%
\MAXright\wd6}%
\def\norroot##1{\hskip-.36em\copy4\hbox to0pt{\hskip.3em$##1$\hss}%
\hskip-.48em\setbox6\hbox{\hskip.6em$##1##1$}%
\MAXright\wd6}%
\def\noroot##1{\ifFIRST\setbox5\hbox{$##1$}\ifdim\wd5>1.3em%
\hskip-.5em\hskip.5\wd5\fi\fi\FIRSTfalse%
\hskip-.36em\raise1.5\wd3\hbox to0pt{\hss\hskip.6em$%
##1$\hss}\copy4\hskip-.38em\setbox6\hbox{$##1$}%
\MAXright\wd6}%
\def\nodroot##1{\ifFIRST\setbox5\hbox{$##1$}\ifdim\wd5>1.3em%
\hskip-.5em\hskip.5\wd5\fi\fi\FIRSTfalse%
\hskip-.36em\lower1.8\wd3\hbox to0pt{\hss\hskip.6em$%
##1$\hss}\copy4\hskip-.38em\setbox6\hbox{$##1$}%
\MAXright\wd6}%
\def\link{\raise.25em\copy0}%
\def\minilink{\raise.25em\copy8}%
\def\llink##1{\raise.35em\copy0\hskip-\wd0%
\raise.12em\copy0\hskip-.5\wd0\hbox to0pt{\hss$##1$\hss}\hskip.5\wd0}%
\def\lllink##1{\raise.40em\copy0\hskip-\wd0\raise.25em\copy0\hskip-\wd0%
\raise.10em\copy0\hskip-.5\wd0\hbox to0pt{\hss$##1$\hss}\hskip.5\wd0}%
\def\rootupright##1{\hbox to0pt{\raise.45em\copy1\hskip-.25em\raise1.3\ht1%
\hbox{\copy3\hskip.3em$##1$}\hss}%
\setbox6\hbox{\hskip.6em\copy1\copy1$##1##1$}%
\ifdim\MAXright<\wd6\MAXright\wd6\fi}%
\def\norootupright##1{\hbox to0pt{\raise.45em\copy1\hskip-.25em\raise1.3\ht1%
\hbox{\kern-.1em\copy4\hskip.3em$##1$}\hss}%
\setbox6\hbox{\hskip.6em\copy1\copy1$##1##1$}%
\ifdim\MAXright<\wd6\MAXright\wd6\fi}%
\def\rootdownright##1{\hbox to0pt{\raise-.5em\copy2\hskip-.25em\raise-1.35\ht1%
\hbox{\copy3\hskip.3em$##1$}\hss}\setbox6%
\hbox{\hskip.6em\copy2\copy2$##1##1$}%
\ifdim\MAXright<\wd6\MAXright\wd6\fi}%
\def\norootdownright##1{\hbox to0pt{\raise-.5em\copy2\hskip-.25em\raise-1.35\ht1%
\hbox{\kern-.1em\copy4\hskip.3em$##1$}\hss}\setbox6%
\hbox{\hskip.6em\copy2\copy2$##1##1$}%
\ifdim\MAXright<\wd6\MAXright\wd6\fi}%
\def\rootdown##1{\hbox to0pt{\hskip-.05em\vrule height.25em depth.65em%
\hskip-.25em\raise-.95em\hbox{\copy3\hskip.3em$##1$}\hss}%
\setbox6\hbox{$##1$}%
\ifdim\MAXright<\wd6\MAXright\wd6\fi}%
\def\norootdown##1{\hbox to0pt{\hskip-.05em\vrule height.25em depth.65em%
\hskip-.25em\raise-.95em\hbox{\copy4\hskip.3em$##1$}\hss}%
\setbox6\hbox{$##1$}%
\ifdim\MAXright<\wd6\MAXright\wd6\fi}%
\def\dots{\raise.25em\copy8\hskip.25em\raisebox{-0.025em}{$\cdots$}\hskip.10em%
\raise.25em\copy8}}%
\def\enddynkin{\ifdim\MAXright>1em\hskip.5\MAXright\else\hskip.5em\fi\egroup}%
\let\dyn\dynkin
\let\edyn\enddynkin
\theoremstyle{plain}
\newtheorem{thm}{Theorem}
\newtheorem{prop}{Proposition}[section]
\theoremstyle{definition}
\newtheorem{defn}[prop]{Definition}
\newtheorem{case}{Case}
\theoremstyle{remark}
\newtheorem{rem}[prop]{Remark}
\numberwithin{equation}{section}
\newcounter{numl}
\newcommand{\labelnuml}{\textup{(\roman{numl})}}
\newenvironment{numlist}{\begin{list}{\labelnuml}%
{\usecounter{numl}\setlength{\leftmargin}{0pt}%
\setlength{\itemindent}{2\parindent}%
\setlength{\itemsep}{\smallskipamount}
\def\makelabel ##1{\hss \llap {\upshape ##1}}}}{\end{list}}
\newcommand{\thismonth}{\ifcase\month\or
  January\or February\or March\or April\or May\or June\or
  July\or August\or September\or October\or November\or December\fi
  \space\number\year}
\DeclareSymbolFont{script}{U}{eus}{m}{n}
\DeclareSymbolFontAlphabet{\mathscr}{script}
\DeclareMathSymbol{\EuWedge}{0}{script}{"5E}
\DeclareMathAlphabet{\mathrmsl}{OT1}{cmr}{m}{sl}
\newcommand{\rssymb}[2]{\newcommand{#1}{{\mathrmsl{#2}}}}
\newcommand{\calsymb}[2]{\newcommand{#1}{{\mathcal{#2}}}}
\newcommand{\bbsymb}[2]{\newcommand{#1}{{\mathbb{#2}}}}
\newcommand{\liealg}[2]{\newcommand{#1}{{\mathfrak{#2}}}}
\newcommand{\liealr}[2]{\renewcommand{#1}{{\mathfrak{#2}}}}
\newcommand{\lieoper}[2]{\newcommand{#1}{\mathop
  {\mathfrak{#2}\null}\nolimits}}
\newcommand{\oper}[3][n]{\newcommand{#2}{\mathop
  {\mathrm{#3}\null}\ifx n#1\nolimits\else\limits\fi}}
\newcommand{\rsoper}[3][n]{\newcommand{#2}{\mathop
  {\mathrmsl{#3}\null}\ifx n#1\nolimits\else\limits\fi}}
\bbsymb\C{C} \bbsymb\F{F} \bbsymb\HQ{H}\bbsymb\I{I} \bbsymb\N{N} \bbsymb\OC{O}
\bbsymb\Q{Q} \bbsymb\R{R} \bbsymb\U{U} \bbsymb\V{V} \bbsymb\W{W} \bbsymb\Z{Z}
\calsymb\cA{A} \calsymb\cB{B} \calsymb\cC{C} \calsymb\cD{D} \calsymb\cE{E}
\calsymb\cF{F} \calsymb\cG{G} \calsymb\cH{H} \calsymb\cI{I} \calsymb\cJ{J}
\calsymb\cK{K} \calsymb\cL{L} \calsymb\cM{M} \calsymb\cN{N} \calsymb\cO{O}
\calsymb\cP{P} \calsymb\cQ{Q} \calsymb\cR{R} \calsymb\cS{S} \calsymb\cT{T}
\calsymb\cU{U} \calsymb\cV{V} \calsymb\cW{W} \calsymb\cX{X} \calsymb\cY{Y}
\calsymb\cZ{Z}
 \newcommand{\Gam}{{\mathrmsl\Gamma}}
\newcommand{\lam}{\lambda}
 \newcommand{\Sig}{{\mathrmsl\Sigma}}
\renewcommand{\geq}{\geqslant} \renewcommand{\leq}{\leqslant}
\rsoper\End{End} \rsoper\Hom{Hom}                
\rsoper\Sym{Sym} \rsoper\Skew{Skew}
\rsoper\gr{gr} \rsoper{\spn}{span}
\rsoper\Aut{Aut} \rsoper\Stab{Stab}              
\rsoper\GL{GL}\rsoper\SL{SL}\rsoper\PGL{PGL}\rsoper\PSL{PSL}\rsoper\Symp{Sp}
\rsoper\CO{CO}\rsoper\On{O} \rsoper\SO{SO}  \rsoper\Pin{Pin}\rsoper\Spin{Spin}
\rsoper\CU{CU}\rsoper\Un{U} \rsoper\SU{SU}
\rsoper\Diff{Diff} \rsoper\SDiff{SDiff}
\lieoper\der{der} \lieoper\stab{stab}            
\lieoper\gl{gl} \lieoper\sgl{sl}\lieoper\symp{sp}
\lieoper\co{co} \lieoper\so{so} \lieoper\spin{spin}
\lieoper\cu{cu} \lieoper\un{u}  \lieoper\su{su}
\rsoper\Vect{Vect} \rsoper\Ham{Ham}
\oper\real{Re}  
\oper\imag{Im}  
\newcommand{\ip}[1]{\langle#1\rangle}
\newcommand{\norm}[2][]{|\mkern-2mu|#2|\mkern-2mu|
  _{\lower1pt\hbox{${}_{#1}$}}}
\newcommand{\Norm}[2][]{\bigl|\mkern-3mu\bigr|#2\bigr|\mkern-3mu\bigr|
  _{\lower1pt\hbox{${}_{#1}$}}}
\newcommand{\restr}[1]{|_{#1}^{\vphantom{y}}}
\newcommand{\abrack}[1]{[\mkern-3mu[#1]\mkern-3mu]}
\newcommand{\transp}{^{\scriptscriptstyle\mathrm T\!}}
\newcommand{\act}{\mathinner{\raise2pt\hbox{$\centerdot$}}}
\newcommand{\dsum}{\oplus}                  
\newcommand{\Dsum}{\bigoplus}               
\newcommand{\tens}{\otimes}                 
\newcommand{\Wedge}{\EuWedge}               
\newcommand{\idealin}{\trianglelefteq}      
\newcommand{\into}{\hookrightarrow}         
\newcommand{\Proj}{\mathrmsl{P}}            
\newcommand{\RP}[1]{\R\Proj^{#1}}           
\newcommand{\CP}[1]{\C\Proj^{#1}}           
\newcommand{\HP}[1]{\HQ\Proj^{#1}}          
\newcommand{\st}{\mathrel{|}}               
\newcommand{\sub}{\subseteq}                
\rsoper\dimn{dim}                           
\rsoper\rank{rank}                          
\rsoper\degree{deg}                         
\rsoper\kernel{ker}\rsoper\image{im}        
\rsoper\alt{alt}   \rsoper\sym{sym}         
\rsoper\Ad{Ad}     \rsoper\ad{ad}           
\rsoper\CoAd{CoAd} \rsoper\coad{coad}       
\rsoper\trace{tr}  \rsoper\trfree{tf}       
\rsoper\detm{det}                           
\rsoper\Vol{Vol}                            
\rssymb\vol{vol}                            
\rssymb\iden{id}                            
\liealg{\f}{f} \liealg{\g}{g} \liealg{\h}{h} \liealg{\n}{n} \liealg{\m}{m}
\liealg{\p}{p} \liealg{\q}{q} \liealr{\t}{t} \liealg{\z}{z}
\newcommand{\ccon}{\theta}
\newcommand{\trl}{\zeta}
\newcommand{\symprod}{\mathbin{\raise1pt\hbox{$\scriptstyle\bigcirc$}}}
\newcommand{\CD}{\mathscr D}
\newcommand{\bx}{{\boldsymbol x}}
\newcommand{\rk}{\ell}
\newcommand{\tabv}[2]{$\vcenter{\hbox{\strut$#1$}\hbox{\strut{$#2$}}}$}
\newcommand{\tabvvv}[3]{$\vcenter{\hbox{\strut$#1$}\hbox{\strut{$#2$}}%
\hbox{\strut{$#3$}}}$}
\begin{document}
\title[Subriemannian metrics and the metrizability of parabolic
geometries]{Subriemannian metrics\\
and the metrizability of parabolic geometries}
\date{\today}
\author{David M.J. Calderbank}
\address{Mathematical Sciences\\ University of Bath\\
Bath BA2 7AY\\ UK.}
\email{D.M.J.Calderbank@bath.ac.uk}
\author{Jan Slov\smash{\'a}k}
\address{Department of Mathematics and Statistics\\
Masaryk University\\ Kotl\'a\v rsk\'a 2\\ 611 37 Brno\\ Czech Republic.}
\email{slovak@math.muni.cz}
\author{Vladim\smash{\'\i}r Sou\smash{\v c}ek}
\address{Mathematical Institute\\ Charles University\\ Sokolovsk\'a 83\\
186 75 Praha 8\\ Czech Republic.}
\email{soucek@karlin.mff.cuni.cz}
\begin{abstract}
We present the linearized metrizability problem in the context of parabolic
geometries and subriemannian geometry, generalizing the metrizability problem
in projective geometry studied by R. Liouville in 1889. We give a general
method for linearizability and a classification of all cases with irreducible
defining distribution where this method applies. These tools lead to natural
subriemannian metrics on generic distributions of interest in geometric control
theory.
\end{abstract}
\thanks{The authors thank the Czech Grant Agency, grant nr. P201/12/G028,
for financial support.}
\maketitle

\section{Introduction}

Many areas of geometric analysis and control theory deal with distributions on
smooth manifolds, i.e., smooth subbundles of the tangent bundle. Let $\cH\leq
TM$ be such a distribution of rank $n$ on a smooth $m$-dimensional manifold
$M$. A smooth curve $c\colon [a,b] \to M$ ($a\leq b\in\R$) is called
\emph{horizontal} if it is tangent to $\cH$ at every point, i.e., for every
$t\in [a,b]$, the tangent vector $\dot c(t)$ to $c$ at $c(t)\in M$ belongs to
$\cH$. It is well known that, at least locally, any two points $x,y\in
M$ can be connected by a horizontal curve $c$ if and only if $\cH$ is
\emph{bracket-generating} in the sense that any tangent vector can be obtained
from iterated Lie brackets of sections of $\cH$.

This paper is concerned with bracket-generating distributions arising in
\emph{parabolic geometries}~\cite{CS}, which are Cartan--Tanaka geometries
modelled on homogeneous spaces $G/P$ where $G$ is a semisimple Lie group and
$P\leq G$ a parabolic subgroup. On a manifold $M$ equipped with such a
parabolic geometry, each tangent space is modelled on the $P$-module $\g/\p$,
and the socle $\h$ of this $P$-module (the sum of its minimal nonzero
$P$-submodules) induces a bracket-generating distribution $\cH$ on $M$. Simple
and well-known examples include projective geometry and (Levi-nondegenerate)
hypersurface CR geometry: in the former case, $\g/\p$ is irreducible and so
$\cH=TM$, but in the latter case $\cH$ is the corank one contact distribution
of the hypersurface CR structure.

A more prototypical example for this paper is when $\cH\leq TM$ is generic of
rank $n$ and corank $\frac12 n(n-1)$, i.e., $m=\frac12 n(n+1) = n+\frac12
n(n-1)$, and $[\Gam(\cH),\Gam(\cH)]=\Gam(TM)$. In this case the Lie bracket on
sections of $\cH$ induces an isomorphism $\Wedge^2\cH\cong TM/\cH$ and the
distribution is said to be \emph{free}. Any such manifold is a parabolic
geometry where $G=\SO(V)$ with $\dim V=2n+1$ and $P$ is the stabilizer of a
maximal ($n$-dimensional) isotropic subspace $U$ of $V$~\cite{DS}. Then
$\g/\p$ has socle $\h\cong U^*\otimes (U^\perp/U)$ with quotient isomorphic to
$\Wedge^2 \h$, and $\h\leq \g/\p$ induces the distribution $\cH\leq TM$ on
$M$.

While parabolic geometry is the main tool for the present work, our motivation
is subriemannian geometry, which concerns the following notion~\cite{Mont}.

\begin{defn} Consider an $m$-dimensional manifold $M$ with a given smooth
distribution $\cH\leq TM$ of constant rank $n$. A
(pseudo-)Riemannian metric $g$ on $\cH$ is called a \emph{horizontal}
or \emph{subriemannian metric on $M$}.
\end{defn}
Horizontal metrics are important in both geometric analysis and control
theory. Among the horizontal curves joining two points, it may be important to
find those which are optimal in some sense, for example those of shortest
length with respect to a horizontal metric. Horizontal metrics also allow for
the definition of a hypo-elliptic sublaplacian~\cite{JL}, allowing methods of
harmonic analysis to be applied. However, this raises the question: what is a
good choice of horizontal metric?

For the distribution $\cH$ on a parabolic geometry, there is a natural
compatibility condition that can be imposed. Indeed, one of the key features
of such a geometry is that it admits a canonical class of connections,
called \emph{Weyl connections}, which form an affine space modelled on
the space of $1$-forms.

\begin{defn} A horizontal metric on the distribution $\cH\leq TM$ induced
from a parabolic geometry $M$ is \emph{compatible} if it is covariantly
constant in horizontal directions with respect to some Weyl connection on $M$.
We say $M$ is \emph{\textup(locally\textup) metrizable} if there exists
(locally) a compatible horizontal metric.
\end{defn}

The metrizability problem has been studied for several classes of parabolic
geometry with $\cH=TM$, in particular, the case of real projective. These
examples exhibit several interesting features, which we seek to generalize to
all parabolic geometries---in particular to those with $\cH\neq TM$.

First, whereas the metrizability condition appears to be highly nonlinear, it
linearizes when viewed as a condition on the inverse metric on $\cH^*$
multiplied by a suitable power of the horizontal volume form. Secondly, this
linear equation is highly overdetermined, with a finite dimensional solution
space. Hence parabolic geometries admitting such horizontal metrics are rather
special. This has been used to extract detailed information about the
structure of the geometry~\cite{BDE,CEMN,DM,EM,Frost,Liouville,Sinjukov}.

If $\h$ is the socle of $\g/\p$, it is not generally the case that $S^2\h$ is
irreducible---indeed $\h$ itself need not be irreducible. In order to
generalize the studied examples, we introduce a condition on $P$-submodules
$B\leq S^2 \h$ containing nondegenerate elements, which we call the
\emph{algebraic linearization condition} (ALC). Our first main result
(Theorem~\ref{alt}) justifies this terminology by showing that for parabolic
geometries and $P$-submodules $B\leq S^2\h$ satisfying the ALC, there is a
bijection between compatible horizontal metrics and nondegenerate solutions of
an overdetermined first order \emph{linear} differential equation. (In fact,
if $\h$ is not irreducible we need a technical extra condition, which we call the
\emph{strong} ALC.)

Our second main result (Theorem~\ref{main}) is a complete classification of
all parabolic geometries and all $P$-submodules $B\leq S^2\h$ such that $\h$
is irreducible and $B$ satisfies the ALC. The classification exhibits two
nicely counterbalancing features. On the one hand, among parabolic geometries
with irreducible socle, those admitting $P$-submodules $B\leq S^2\h$
satisfying the ALC are rare. On the other hand, the list of examples is quite
long: we state the classification using three tables containing 14 infinite
families and 6 exceptional cases. Many of these examples invite further study
(see e.g.~\cite{P}).

The structure of the paper is as follows. In section~\ref{s:bg} we briefly
outline the main notions and tools of parabolic geometry, referring
to~\cite{CS} for details, but concentrating on examples. We also establish the
local metrizability of the homogeneous model. In section~\ref{s:mlp}, we
describe the linearization principle and prove Theorem~\ref{alt}. We give
examples, and in particular show how explicit formulae can be obtained not
only for the homogeneous model, but also for so-called \emph{normal
  solutions}.  Section~\ref{s:class} is devoted to the main classification
result. We conclude by giving examples (Theorem~\ref{more}) where the socle is
not irreducible.

\section{Background and motivating examples}\label{s:bg}

We work throughout with real smooth manifolds $M$, real Lie groups $P$ and
real Lie algebras $\p$ (e.g., we view $\GL(n,\C)$ as a real Lie group and
$\gl(n,\C)$ as a real Lie algebra).

A (real or complex) \emph{$P$-module} $W$ is a finite dimensional (real or
complex) vector space carrying a representation $\rho_W\colon P\to \GL(W)$;
$W$ is then also a $\p$-module, where $\p$ is the Lie algebra of $P$, i.e., it
carries a representation $\tilde\rho_W \colon\p\to\gl(W)$. We write $\xi\act
w$ for $\tilde\rho_W(\xi)(w)$. The \emph{nilpotent radical} of $\p$ is the
intersection $\n$ of the kernels of all simple $\p$-modules. It is an ideal in
$\p$ and the quotient $\p_0:=\p/\n$ is reductive. We let $P_0:=P/\exp\n$ be
the corresponding quotient group with Lie algebra $\p_0$.  Any $P$-module $W$
has a filtration
\begin{equation}\label{eq:alg-filt}
0=W^{(0)}<W^{(1)}< \cdots < W^{(k)}=W\quad\text{with}\quad \n\act W^{(j)}\leq
W^{(j-1)} \quad \forall\,j\in \{1,\ldots k\},
\end{equation}
by $P$-submodules, where $\n\act W^{(j)}$ is the span of all $\xi\act w$ with
$\xi\in\n$ and $w\in W^{(j)}$. We let $\gr(W):=\Dsum_{1\leq j\leq k}
W^{(j)}/W^{(j-1)}$, which is a $P_0$-module.

\subsection{Parabolic geometries and Weyl structures}

Let $P\leq G$ be a closed Lie subgroup of a Lie group $G$, whose Lie algebra
$\p\leq \g$ has nilpotent radical $\n\idealin\p$.

\begin{defn} A \emph{Cartan geometry} of type $G/P$ on a smooth manifold $M$
is a principal $P$-bundle $\cG \to M$ equipped with a $P$-equivariant $1$-form
$\ccon\colon T\cG \to \g$ such that $\ccon_p\colon T_p\cG\to \g$ is an
isomorphism for all $p\in \cG$, and $\ccon(X_\xi)=\xi$ for all $\xi\in \p$,
where $\xi\mapsto X_\xi$ is the infinitesimal $\p$ action on $\cG$. The
\emph{homogeneous model} is the Cartan geometry $G\to G/P$ equipped with the
Maurer--Cartan form of $G$.
\end{defn}

Any $P$-module $W$ induces a bundle $\cW:=\cG\times_P W\to M$. A
filtration~\eqref{eq:alg-filt} of $W$ induces a bundle filtration
$0=\cW^{(0)}<\cW^{(1)}< \cdots < \cW^{(k)}=\cW$ with $\gr(\cW):=\Dsum_{k\in
  \N} \cW^{(k)}/\cW^{(k+1)}\cong \cG_0\times_{P_0} \gr(W)$ where
$\cG_0:=\cG/\exp\n$ is a principal $P_0$-bundle.

In particular, taking $W=\g/\p$, the projection of $\ccon$ onto $\g/\p$
induces a bundle isomorphism $TM\to \cG\times_P \g/\p$. This $P$-module has an
inductively defined filtration 
\[
0=\h^{(0)}< \h^{(1)}<\cdots< \h^{(k)}=\g/\p,\quad\text{where}\quad
\h^{(j)}:=\{x\in \g/\p\st \forall\,\xi\in \n,\;\;\xi\act x\in \h^{(j-1)}\}.
\]
In particular $\h:=\h^{(1)}$ induces a distribution $\cH\leq TM$ on $M$. We
return to this in \S\ref{ss:filt}.

We specialize to the case that $G$ is a semisimple Lie group and $P$ is a
parabolic subgroup of $G$, meaning that the nilpotent radical of $\p$ is its
Killing perp $\p^\perp$ in $\g$. Then Cartan geometries of type $G/P$ are
called \emph{parabolic geometries} and have several distinctive features which
we briefly explain and illustrate in the examples below (see~\cite{CS} for
further details).

First, the Killing form of $\g$ induces a duality between $\p^\perp$ and
$\g/\p$, and hence on any parabolic geometry of type $G/P$, we have a natural
isomorphism $\cG\times_P \p^\perp\cong T^*M$ dual to the isomorphism $TM\cong
\cG\times_P \g/\p$.

Secondly, the principal $P_0$-bundle $\cG_0$ has a distinguished family of
principal connections called \emph{Weyl connections}. To see this, it is
convenient to fix a parabolic subalgebra $\p^{\mathrm{op}}$ \emph{opposite} to
$\p$ in the sense that $\g=\p^\perp\oplus\p^{\mathrm{op}}$.  This identifies
$P_0$ with a subgroup of $P$, and induces a decomposition of $P_0$-modules
\begin{equation}\label{eq:g-decomp}
\g=\m\oplus \p_0\oplus \p^\perp,
\end{equation}
where $\m\cong \g/\p$ is the nilpotent radical of $\p^{\mathrm{op}}$. A
\emph{Weyl structure} is a $P_0$-equivariant splitting $\iota\colon\cG_0\into
\cG$ of the projection $\cG\to\cG_0$ (i.e., a reduction of structure group of
$\cG$ to $P_0$); the corresponding Weyl connection is the $\p_0$-component of
$\iota^*\ccon$. Weyl structures (or connections) form an affine space modelled
on the space of $1$-forms on $M$.

\subsection*{Summary} A manifold $M$ with a parabolic geometry of type
$G/P$ comes equipped with: a filtration of the tangent bundle $TM$, a $G_0$
structure on $\gr(TM)$, and a distinguished class of $G_0$-connections (the
Weyl connections).

\smallbreak There are general results~\cite{CSc,CS} stating that these data
are often sufficient to determine the parabolic geometry.  Rather than explore
this in generality, we turn to examples.

\subsection{Projective parabolic geometries}\label{ss:proj-pg}

We begin with some examples in which $\p^\perp$ is abelian, hence the
filtration of $\g/\p$ is trivial and~\eqref{eq:g-decomp} is a $\Z$-grading of
$\g$ as a Lie algebra, with $\p_0$ in degree $0$ and $\m,\p^\perp$ in degree
$\pm1$ (also called a $|1|$-grading). There is thus a $P_0$-structure on $TM$
and an algebraic bracket $\abrack{\cdot,\cdot}$ on $TM\dsum \p_0(M)\dsum T^*M$
where $\p_0(M)\leq \gl(TM)$ is the bundle induced by $\p_0$. In this case a
Weyl connection induces a $P_0$-connection $\nabla$ on $TM$ and any other Weyl
connection is given (on vector fields $Y,Z$) by
\begin{equation}\label{eq:weyl-affine}
\hat\nabla_ZY =\nabla_ZY + \abrack{\abrack{Z,\Upsilon}, Y} =\nabla_ZY +
\abrack{Z,\Upsilon}\act Y
\end{equation}
for some $1$-form $\Upsilon$, and we write $\hat\nabla=\nabla+\Upsilon$ for
short.

\subsubsection*{Projective geometry\nopunct} in dimension $m$ may be viewed
as a parabolic geometry of type $G/P$ where $G=\PGL(m+1,\R)$ and $P$ is the
parabolic subgroup of block lower triangular matrices with blocks of sizes $m$
and $1$. Here $\m=\R^m$, $\p_0 = \gl(m,\R)$, and $\p^\perp=\R^{m*}$, and the
homogeneous model $G/P$ is $m$-dimensional real projective space $\RP m$.

On a parabolic geometry of this type, the $G_0$-structure carries no
information as $G_0\cong\GL(m,\R)$, but two Weyl connections $\nabla$ and
$\hat\nabla=\nabla+\Upsilon$ are related (on vector fields $Y,Z$) by
\begin{equation}\label{projective-change}
\hat\nabla_ZY = \nabla_ZY+\abrack{\abrack{Z,\Upsilon}, Y}=
\nabla_ZY+\Upsilon(Z)Y+\Upsilon(Y) Z.
\end{equation}
Using abstract indices we may write this as
\[
\hat\nabla_aY^b = \nabla_aY^b +  \Upsilon_aY^b+\Upsilon_cY^c\delta_a^b.
\]
Thus the connections $\nabla$ and $\hat\nabla$ have the same torsion and the
same geodesics (as unparametrized curves). Setting the torsion to zero, we
have that the Weyl connections form a projective class $[\nabla]$.

\subsubsection*{\textup(Almost\textup) c-projective geometry\nopunct} is a
complex analogue of projective geometry~\cite{CEMN,Hrdina,HS,Yoshi78} with
$G=\PGL(m+1,\C)$ and $P\leq G$ block lower triangular as in the projective
case, so the homogeneous model $G/P$ is complex projective space $\CP m$
viewed as a real homogeneous space. A parabolic geometry of this type on a
$2m$-manifold $M$ is given by an almost complex structure $J\in\gl(TM)$ and a
class $[\nabla]$ of connections preserving $J$ which differ by
\begin{equation*}
\hat\nabla_aY^b = \nabla_aY^b +  \Upsilon_aY^b - \Upsilon_cJ^c_aJ^b_dY^d
+\Upsilon_cY^c\delta_a^b - \Upsilon_cJ^c_dY^dJ_a^b,
\end{equation*}
This can be obtained from the real projective formula by substituting
$(1,0)$-forms $\Upsilon-iJ\Upsilon$ and $(1,0)$ vectors
into~\eqref{projective-change}.

\subsubsection*{\textup(Almost\textup) grassmannian geometries\nopunct} are
generalizations of real projective geometry with $G=\PGL(m+k,\R)$ and $P$
block lower triangular with blocks of size $m$ and $k$.  The homogeneous model
$G/P$ is the grassmannian of $k$-planes in $\R^{m+k}$. On a parabolic geometry
of this type, the $G_0$-structure is given by an identification of the tangent
space with the tensor product of two auxiliary vector bundles $E^*$ and $F$ of
ranks $k$ and $m$ (with $\Wedge^kE^*\simeq \Wedge^mF$). In abstract index
notation, we write $e_{A'}$ for a section of $E$ and $f_A$ for a section of
$F$, hence $Y^{A'}_A$ for a vector field and $\eta_{A'}^B$ for a one-form.

The Weyl connections are tensor products of connections on $E^*$ and
$F$ with fixed torsion, and the freedom in their choice is
(cf.~\cite[p. 514]{CS})
\begin{equation}\label{grassmannian_change}
\hat\nabla^{A}_{A'}Y^{B'}_B=\nabla^{A}_{A'}Y^{B'}_B +
\delta^{B'}_{A'}\Upsilon^A_{C'}Y^{C'}_B +\delta_{B}^A\Upsilon_{A'}^CY_C^{B'}.
\end{equation}

When $m=2\ell$ and $k=2$ there is an interesting related geometry obtained by
replacing $\PGL(2\ell+2,\R)$ by another real form of $\PGL(2\ell+2,\C)$,
namely $\PGL(\ell+1,\HQ)$. The homogeneous model is then quaternionic
projective space $\HP \ell$, and a parabolic geometry of this type is
an (almost) quaternionic manifold~\cite{HS}.

\subsection{Parabolic geometries on filtered manifolds}\label{ss:filt}

We now turn to the examples of greater interest to us, in which $\cH$ is a
proper subbundle of $TM$. In fact, in these examples, the geometry is often
entirely determined by the distribution $\cH$, as we now discuss.

Given a smooth manifold $M$ of dimension $m$, equipped with a distribution
$\cH=\cH^{(1)}\leq TM$ of rank $n$, the Lie bracket of sections of $\cH$ (as
vector fields) defines a bundle map $\Wedge^2\cH\to TM/\cH$ called the
\emph{Levi form} of $\cH$. If we assume the image of the Levi form has
constant rank, it defines a subbundle $\cH^{(2)}\leq TM$ with $\cH^{(2)}/\cH$
equal to the image.  We thus inductively define $\cH^{(j)}\leq\cH^{(j+1)}\leq
TM$ such that $\cH^{(j+1)}/\cH^{(j)}$ is the image of the Lie bracket
$\cH\tens\cH^{(j)}\to TM/\cH^{(j)}$. If we further assume $\cH$ is
\emph{bracket-generating} i.e., $\cH^{(k)}=TM$ for some $k\in \N$, then we
obtain a filtration
\[
0=\cH^{(0)}<\cH^{(1)}<\cdots <\cH^{(k)}=TM
\]
such that the Lie bracket of sections of $\cH^{(i)}$ and $\cH^{(j)}$ is a
section of $\cH^{(i+j)}$. The associated graded vector bundle $\gr(TM)$ is, at
each $x\in M$, a graded Lie algebra $\g_x$ called the \emph{symbol algebra} of
$\cH$ at $x$. We finally assume that the Lie algebras $\g_x$ are all
isomorphic to the same nilpotent radical $\m$ of a fixed parabolic subalgebra
$\p^{\mathrm{op}}$ in a semisimple Lie algebra $\g$. In many cases
$\p_0=\p\cap \p^{\mathrm{op}}$, where $\p$ and $\p^{\mathrm{op}}$ are opposite
in $\g$, is the full algebra of automorphisms of $\m$ (as a graded Lie
algebra), and, as discussed in~\cite{CSc,CS}, this suffices to equip $M$ with a
parabolic geometry of type $G/P$.

The decomposition~\eqref{eq:g-decomp} of $\g$ is no longer $|1|$-graded and
this complicates the description of Weyl connections considerably. However, if
we work only with \emph{horizontal} (or \emph{partial}) \emph{connections},
i.e., restrict the Weyl connections to covariant derivatives in $\cH$
directions only, then the theory is as simple as in the $|1|$-graded case: the
Lie bracket between $\m$ and $\p^\perp$ in $\g$ induces a Lie bracket between
$\h\leq \m$ and $\p^\perp/[\p^\perp,\p^\perp]\cong \h^*$ with values in
$\p_0$, and hence an algebraic bracket
$\abrack{\cdot,\cdot}\colon\cH\tens\cH^*\to \p_0(M)$. Any two Weyl connections
$\nabla$ and $\hat\nabla$ are related by
\[
\hat\nabla_Z v=\nabla_Z v +\abrack{Z,\Upsilon}\act v
\]
where $\Upsilon$ is a section of $\cH^*$, $Z$ is a section of $\cH$, and $v$
is a section of $\cG_0\times_{P_0} V$ for any $G_0$-module $V$. We write
$\hat\nabla\restr{\cH}=\nabla\restr{\cH}+\Upsilon$ for short.

\subsubsection*{Free distributions\nopunct} are parabolic geometries
with $G=\SO(n+1,n)$ and $P$ block lower triangular with blocks of sizes $n$,
$1$, $n$, where the inner product is defined on the standard basis
$e_0,e_1\ldots e_{2n}$ by $\ip{e_i,e_{n+1+i}}=\ip{e_n,e_n}=\ip{e_{n+1+i},e_i}
=1$ for $0\leq i\leq n-1$ and all other inner products zero,
see~\cite{DS}. The homogeneous model $G/P$ is the grassmannian of maximal
isotropic subspaces of $\R^{2n+1}$. Elements of the Lie algebra
$\g=\so(n+1,1)$ have the form
\begin{equation*}
\begin{pmatrix}
-A\transp&-\xi\transp&B \\ -\gamma\transp&0&\xi\\ C&\gamma&A
\end{pmatrix}
\end{equation*}
where $B\transp=-B$ and $C\transp=-C$. Here $A\in\gl(n,\R)\cong\p_0$, $\xi\in
\R^n\cong\h$, $\gamma\in\R^{n*}\cong\h^*$, $B\in \Wedge^2\R^n \cong\Wedge^2\h$
and $C\in\Wedge^2\R^{n*}\cong\Wedge^2\h^*$.

A parabolic geometry of this type on a manifold $M$ of dimension $\frac12
n(n+1)$ may be determined by a distribution $\cH$ of rank $n$ whose Levi form
$\Wedge^2\cH\to TM/\cH$ is an isomorphism, hence the term ``free
distribution''. The $P_0$-structure is no additional data, and Weyl
connections may be determined as $P_0$-connections $\nabla$ such that for any
sections $Y,Z$ of $\cH$, the projection of $\nabla_Z Y-\nabla_Y Z$ onto
$TM/\cH\cong \Wedge^2\cH$ is $X\wedge Y$. If
$\hat\nabla\restr{\cH}=\nabla\restr{\cH}+\Upsilon$ we then compute that
\begin{equation}\label{free-dist-change}
\hat\nabla_Z Y =\nabla_ZY +\Upsilon(Y)Z.
\end{equation}

\subsubsection*{Free CR or quaternionic CR distributions\nopunct}
are obtained by replacing $\so(n+1,n)$ with $\g=\su(n+1,n)$ or $\symp(n+1,n)$,
again with (complex or quaternionic) blocks of sizes $n$, 1, $n$, and $\p$
being block lower triangular~\cite{SchmalzS}. Elements of $\g$ now have the
form
\begin{equation*}
\begin{pmatrix}
-A^\dagger&-\xi^\dagger&B \\ -\gamma^\dagger&\mu&\xi\\ C&\gamma&A
\end{pmatrix}
\end{equation*}
where ${}^\dagger$ denotes the (complex or quaternionic) hermitian conjugate,
$B^\dagger=-B$, $C^\dagger=-C$ and $\overline\mu=-\mu$. We may thus compute,
using matrix commutators
\begin{equation*}\label{general-k-n-k}
\bigl[[\xi-\xi^\dagger,\gamma-\gamma^\dagger],\eta-\eta^\dagger\bigr] =
\bigl(\xi\gamma\eta + \eta(\gamma\xi - \xi^\dagger\gamma^\dagger)\bigr)-
\bigl(\xi\gamma\eta + \eta(\gamma\xi - \xi^\dagger\gamma^\dagger)\bigr)^\dagger.
\end{equation*}
Note that the order here is important in the quaternionic case.

A parabolic geometry of this type has a complex or quaternionic rank $n$
distribution $\cH$ for which the Levi form is complex or quaternionic skew
hermitian, inducing an isomorphism of $TM/\cH$ with such forms on $\cH$.  If
$\hat\nabla\restr{\cH}=\nabla\restr{\cH}+\Upsilon$ we then have, on
sections $Y,Z$ of $\cH$,
\[
\hat\nabla_Z Y =\nabla_ZY + Z\,\Upsilon(Y)+Y\,(\Upsilon(Z)
-\overline{\Upsilon(Z)}).
\]

\subsection{First BGG operators, local metrizability of the homogeneous model,
  and normal solutions}\label{ss:1BGG}\label{ss:mhm}

Let $\cG \to M,\,\ccon$ be a Cartan geometry of type $G/P$. The extension of
$\cG$ by the left action of $P$ on $G$ is a principal $G$-bundle $\tilde\cG =
\cG\times_P G$ with $G$-connection $\tilde \ccon\colon \tilde \cG\to \g$, and
(by construction) a reduction $\cG\sub\tilde\cG$ of structure group to $P$,
and this provides an alternative description of the Cartan geometry. It
follows that for any $G$-module $V$, there is a canonical induced linear
connection on $\cV=\cG\times_P V\cong \tilde \cG\times_G V$. These bundles are
called \emph{tractor bundles} and their sections \emph{tractors}.

In the parabolic case, the \emph{BGG machinery} of~\cite{CSS, CD} provides a
sequence of invariant linear differential operators between bundles induced by
$P$-modules associated to $V$. The first such operator is defined on the
bundle $\cG\times_P V/(\p^\perp\act V)\cong \cV/(T^*M\act\cV)$ and is
overdetermined.

When $M=G/P$ is the homogeneous model, the kernel of this \emph{first BGG
  operator} is in bijection with the space of parallel sections of the tractor
bundle $\cV$, and the solutions have an explicit polynomial expression in
normal coordinates.  In more detail, fix an opposite parabolic subalgebra
$\p^{\mathrm{op}}$ to $\p\leq\g$, inducing a
decomposition~\eqref{eq:g-decomp}.  Then $\exp\m\leq G$ is a unipotent
subgroup of $G$ which determines a reduction $\cG_0\cong P_0\exp\m\leq G$ of
the homogeneous model $G\to G/P$ to the structure group $P_0$ over the image
$M$ of $\exp\m$ in $G/P$, hence a Weyl connection over $M$, the \emph{normal
  flat Weyl connection}.

Now if $\cV=\cG_0\times_{P_0} V$ is induced by a $P_0$-module $V$, the Weyl
covariant derivative of sections can be defined as the differentiation of
$P_0$-equivariant $V$-valued functions on $\cG_0$ in the direction of the
constant vector fields with respect to the Weyl connection, and the subgroup
$\exp\m$ is tangent to all such constant vector fields.  Thus any constant
coordinate function $f:\exp\m\to V$, with $f(x)=f_0$ for all $x\in\exp\m$,
defines a covariantly constant section with values in $\cV$. In particular,
choosing any nondegenerate symmetric 2-form $g$ in $S^2\m^*$, the metric
defined by the constant $g$ in the normal flat coordinates is covariantly
constant with respect to the normal flat Weyl connection. Thus the homogeneous
model $G/P$ is locally metrizable. By~\cite{CGH}, such explicit formulae also
apply on general curved geometries to the so called \emph{normal solutions},
which are those induced by parallel sections of the corresponding tractor
bundle. We discuss this further in \S\ref{ss:mtb}.

\section{Metrizability and the linearization principle}\label{s:mlp}

\subsection{First order operators}\label{ss:firstorder}

In~\cite{SS}, the second and third authors developed a theory of invariant
first order linear operators for parabolic geometries, generalizing work of
Fegan~\cite{Fegan} in the conformal case (cf.~\cite[Appendix B]{Gauduchon}).

We first fix some notation. The Killing form of $\g$ induces a nondegenerate
invariant scalar product on $\p_0=\p/\p^\perp$, such that the decomposition
into the semisimple part $\p_0^{ss}=[\p_0,\p_0]$ and the centre $\z(\p_0)$ is
orthogonal. Thus any Cartan subalgebra of $\p_0\tens\C$ has an orthogonal
decomposition $\t = \t'\dsum\t_0$, where $\t'$ is a Cartan subalgebra of
$\p_0^{ss}\tens\C$ and $\t_0=\z(\p_0)\tens\C$.  Further, $\t^* = \t'^*\dsum
\t_0^*$ is the dual decomposition, hence is orthogonal with respect to the
induced scalar product on $\t^*$.  We write the corresponding decomposition of
a weight $\lambda\in \t^*$ as $\lambda = \lambda'+\lambda^0$. Let $\Sig_0$ be
the set of simple roots $\alpha$ of $\g$ whose root space $\g_\alpha$ is in
$\h^*\otimes \C$. The remaining simple roots have root spaces in
$\p_0\otimes\C$, and hence belong to $\t'^*$ (i.e., they vanish on $\t_0$).
Hence $\alpha^0$, for $\alpha\in\Sig_0$, form a basis for $\t_0^*$ (dual to
the basis of $\t_0$ formed by the fundamental coweights which belong to
$\t_0$).

Let $V_\lambda$ be an irreducible complex $\p_0$-module with highest weight
$\lambda=\lambda'+\lambda^0\in\t^*$, let $\alpha = \alpha'+\alpha^0\in\Sig_0$,
and let $\mu = \mu'+\mu^0$ be the highest weight of a component $V_\mu$ in the
tensor product $V_{\lambda}\tens V_\alpha$. The key observation from
\cite[Theorem 4.4]{SS} is that there is a first order invariant operator
between sections of the bundles induced by $V_\lambda$ and $V_\mu$ if and
only if the scalar expression
\begin{equation*}
c_{\lambda,\mu,\alpha} =
\tfrac12\bigl((\mu-\lambda,\mu+\lambda+2\rho')-(\alpha,\alpha+2\rho')\bigr)
\end{equation*}
vanishes, where $\rho'\in \t'^*$ is half the sum of the positive roots of
$\p_0$. We split this expression into contributions from $\t'^*$ and $\t_0^*$
using the fact that $\mu^0=\lambda^0+\alpha^0$. Thus
\begin{equation}\label{eq:cas}\begin{split}
c_{\lambda, \mu, \alpha} &= c_{\lambda',\mu',\alpha'}+\tfrac12\bigl(
(\alpha_0, 2\lambda^0+\alpha^0)-(\alpha^0,\alpha^0)\bigr)\\
&= c_{\lambda',\mu',\alpha'} + (\lambda^0, \alpha^0).
\end{split}\end{equation}
If we fix $\lambda',\alpha$ and $\mu'$, this decomposition provides one (real)
linear equation on the central weight $\lambda^0$. This establishes the
existence of many first order operators~\cite{SS}. Here we
exploit~\eqref{eq:cas} in a more specific way.

\begin{prop}\label{p:fos} Let $\lambda'\in \t'^*$ be the highest weight of
a $\p_0^{ss}$-module, and for each $\alpha\in \Sig_0$, let $\mu'_\alpha
\in\t'^*$ be the highest weight of an irreducible component of $V_{\alpha'}
\tens V_{\lambda'}$.  Then there is a unique central weight $\lambda^0\in
\t_0^*$ such that for all $\alpha\in\Sig_0$, there is an invariant linear
first order operator between sections of the bundles induced by $V_\lambda$
and $V_{\mu_\alpha}$, where $\lambda=\lambda'+\lambda^0$ and $\mu_\alpha
=\mu_\alpha'+\lambda^0+\alpha^0$.
\end{prop}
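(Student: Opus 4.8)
The plan is to read off the claim directly from the decomposition~\eqref{eq:cas} of the Casimir-type obstruction. By the criterion of~\cite[Theorem 4.4]{SS} recalled above, an invariant first order operator between the bundles induced by $V_\lambda$ and $V_{\mu_\alpha}$ exists precisely when $c_{\lambda,\mu_\alpha,\alpha}=0$. Using $\mu_\alpha^0=\lambda^0+\alpha^0$, equation~\eqref{eq:cas} rewrites this obstruction as
\[
c_{\lambda,\mu_\alpha,\alpha}=c_{\lambda',\mu_\alpha',\alpha'}+(\lambda^0,\alpha^0),
\]
so for each fixed $\alpha\in\Sig_0$ the vanishing condition is the single real affine-linear equation $(\lambda^0,\alpha^0)=-c_{\lambda',\mu_\alpha',\alpha'}$ on the unknown central weight $\lambda^0\in\t_0^*$. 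The right-hand side is a fixed real number determined by the data $\lambda'$, $\alpha$, $\mu_\alpha'$ that we are given.

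The key point is then linear algebra on $\t_0^*$. First I would invoke the paragraph preceding the proposition: the classes $\alpha^0$, for $\alpha\in\Sig_0$, form a basis of $\t_0^*$. Since the scalar product on $\t^*$ induced by the Killing form is nondegenerate and restricts nondegenerately to $\t_0^*$ (by the orthogonality of the decomposition $\t^*=\t'^*\dsum\t_0^*$ noted in \S\ref{ss:firstorder}), the linear functionals $\lambda^0\mapsto(\lambda^0,\alpha^0)$, indexed by $\alpha\in\Sig_0$, are likewise a basis of $(\t_0^*)^*$. Hence the system of $|\Sig_0|$ equations $(\lambda^0,\alpha^0)=-c_{\lambda',\mu_\alpha',\alpha'}$, one for each $\alpha\in\Sig_0$, has a unique solution $\lambda^0\in\t_0^*$. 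Setting $\lambda=\lambda'+\lambda^0$ and $\mu_\alpha=\mu_\alpha'+\lambda^0+\alpha^0$, this $\lambda^0$ makes every obstruction $c_{\lambda,\mu_\alpha,\alpha}$ vanish simultaneously, giving the asserted invariant operators, and uniqueness of $\lambda^0$ is immediate from uniqueness of the solution of the linear system.

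I would also check the one compatibility point needed for the statement to make sense: that $\lambda=\lambda'+\lambda^0$ and each $\mu_\alpha=\mu_\alpha'+\lambda^0+\alpha^0$ are genuine highest weights of irreducible $\p_0$-modules, i.e.\ that $V_{\mu_\alpha}$ really is a component of $V_\alpha\tens V_\lambda$. This is automatic: tensoring with the line corresponding to the purely central weight shift by $\lambda^0$ does not change the $\p_0^{ss}$-decomposition, so $V_{\mu_\alpha'}\le V_{\alpha'}\tens V_{\lambda'}$ promotes to $V_{\mu_\alpha}\le V_\alpha\tens V_\lambda$ with the stated central weight $\mu_\alpha^0=\alpha^0+\lambda^0$. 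There is no real obstacle here; the only mild subtlety is remembering that the relevant scalar product is real-valued on the real span of the roots, so that ``one real linear equation'' is the correct count and the unique solution $\lambda^0$ lies in the real form of $\t_0^*$ rather than merely in its complexification.
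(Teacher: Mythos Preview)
Your argument is correct and matches the paper's approach exactly: the paper leaves the proposition without a separate proof because it follows immediately from the preceding discussion, namely that~\eqref{eq:cas} turns the vanishing of each $c_{\lambda,\mu_\alpha,\alpha}$ into one real linear equation $(\lambda^0,\alpha^0)=-c_{\lambda',\mu_\alpha',\alpha'}$ on $\lambda^0$, and since the $\alpha^0$ for $\alpha\in\Sig_0$ form a basis of $\t_0^*$, the resulting system has a unique solution. Your additional checks on the compatibility of the weights and the reality of the solution are sound and do no harm.
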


A particular case of this result arises when $\mu_\alpha'=\lambda'+\alpha'$ so
that $\mu_\alpha=\lambda+\alpha$ and $V_{\mu_\alpha}$ is the Cartan product of
$V_\lambda$ and $V_{\alpha}$. In this case, the unique $\lambda^0$ is such
that $(\lambda,\alpha)=0$ for all $\alpha\in\Sig_0$, so that $\lambda$ is a
dominant weight for $\g$ and the first order system is the first BGG operator
on the bundle induced by $V_\lambda$.

\subsection{The algebraic linearization condition}

Let $(\cG\to M,\ccon)$ be a parabolic geometry of type $(G,P)$ and let $\h$
be the socle of the $\p$-module $\g/\p$, whose central weights form a basis
of $\z(\p_0)^*$.  As we have seen, $\cG\times_P\h\sub\cG\times_P \g/\p\cong
TM$ defines a (bracket generating) ``horizontal'' distribution $\cH\sub TM$.
Our aim is to construct compatible subriemannian (or pseudo-riemannian)
metrics, i.e., pseudo-riemannian metrics $g$ on $\cH$ for which there exists
a horizontal metric Weyl connection (a Weyl connection $\nabla$ with
$\nabla_Z g=0$ for all horizontal vector fields $Z$).

Let $c\colon \h^*\otimes S^2\h\to \h$ be the natural contraction. We then
posit the following.

\begin{defn} A nontrivial $\p_0$-submodule $B\leq S^2\h$ satisfies the
\emph{algebraic linearization condition} (ALC) if and only if $B$ has
nondegenerate elements, and there exist $\p_0$-submodules $\h_i \leq\h$ and
$B_i\leq S^2\h_i$ ($i\in\{1,\ldots r\}$) with $\h=\bigoplus_{i=1}^r \h_i$ and
$B=\bigoplus_{i=1}^r B_i$ such that for each $i\in\{1, \ldots r\}$, $B_i$ is
irreducible, and for any $\alpha\in\Sig_0$ and any irreducible component $W$
of $B_i\tens\C$, $(V_\alpha\tens W)\cap(\ker c\tens\C)$ is irreducible or zero.
\end{defn}
\begin{rem}\label{r:ALC} Note that $\eta\in B$ is nondegenerate if and only if
the same is true for each component $\eta_i\in B_i$. The restrictions
$b_i\colon \h^*\otimes B_i \to \h_i$ of $c$ are then surjective, and so we may
write $\h^*\otimes B_i=\ker b_i\oplus\trl_i(\h_i)$ where $\trl_i\colon \h_i\to
\h^*\otimes B_i$ is a $\p_0$-invariant map with $b_i\circ\trl_i=\iden_{\h_i}$.
Since $B_i$ is irreducible, it must lie in a single weight space of $\t_0$,
with weight $-\alpha^0-\beta^0$ where $\alpha,\beta\in \Sig_0$; hence it is in
the image of $\h_\alpha\tens\h_\beta\to S^2\h\otimes\C$ for the corresponding
weight spaces and so $\h_i\otimes\C$ has at most two irreducible components.
\end{rem}

\subsection{The linearization principle}\label{ss:lp}

Suppose first for simplicity that $B\leq S^2\h$ is absolutely irreducible and
satisfies the ALC (so $\h$ has at most two irreducible components) and let
$\pi=\iden_{\h^*\otimes B}-\trl\circ b$ be the projection onto $\ker
(b\colon\h^*\otimes B\to \h)$. The linearization method constructs a
(pseudo-riemannian) metric on $\cH$, i.e., a nondegenerate section $g$ of
$S^2\cH^*$ from a weighted inverse metric, i.e., a section $\eta$ of
$S^2\cH\otimes \cL$ for some line bundle $\cL$. For this we suppose $\eta$ is
a section of $\cB\otimes \cL$, where $\cB=\cG\times_P B$ and $\cL$ is a line
bundle induced by a weight of $\z(\p_0)$. We write $b,\trl,\pi$ also for
the induced bundle homomorphisms (tensored by the identity on $\cL$) and
choose $\cL$ so that there is an invariant first order linear operator $\CD$
from $\Gamma(\cB\otimes\cL)$ to $\Gamma(\ker b)$ with
$\CD=\pi\circ\nabla\restr\cH$ for any Weyl structure $\nabla$. If $\dim B=1$,
then $\ker b=0$, so $\CD$ is the zero operator, and we take $\cL$ to be
trivial.  Otherwise $\cL,\CD$ are determined by Proposition~\ref{p:fos}.  Due
to the ALC, $\ker b$ is then a sum of Cartan products of summands of $\h^*$
and $B$, and the operator $\CD$ is the first BGG operator.

Solutions $\eta$ of the linear differential equation $\CD\eta=0$ are
characterized by the fact that for some (hence any) Weyl structure $\nabla$,
there is a section $X^\nabla$ of $\cH\otimes\cL$ such that
\begin{equation*}
\nabla\restr{\cH}\eta=\trl(X^\nabla).
\end{equation*}
Now suppose $\hat\nabla\restr\cH=\nabla\restr\cH+\Upsilon$ with $\Upsilon$
in $\cH^*$. Then for any $Z\in \Gamma\cH$, $\hat\nabla_Z\eta=
\nabla_Z\eta+\abrack{Z,\Upsilon}\act\eta$, and
$\abrack{\cdot,\Upsilon}\act\eta$ is in the image of $\trl$ by the
invariance of $\CD$. Hence by Schur's lemma and \S\ref{ss:firstorder}
(i.e.,~\cite{SS}):
\begin{equation*}
\abrack{\cdot,\Upsilon}\act\eta=
(\trl\circ b)(\abrack{\cdot,\Upsilon}\act\eta)
=(\trl\circ b)\Bigl({\textstyle\sum_{\alpha\in\Sig_0} \ell_\alpha \Upsilon_\alpha}
\tens\eta\Bigr)
\end{equation*}
for nonzero scalars $\ell_\alpha$, where $\Upsilon=\sum_{\alpha\in\Sig_0}
\Upsilon_\alpha$ with $\Upsilon_\alpha\in V_\alpha\sub \h^*\tens\C$. If we
define $\sharp_\eta(\Upsilon)=\sum_{\alpha\in\Sig_0}\ell_\alpha
b(\Upsilon_\alpha\tens\eta)$, we deduce that
\begin{equation*}
\hat\nabla\restr{\cH}\eta=\nabla\restr{\cH}\eta
+\trl(\sharp_\eta(\Upsilon)).
\end{equation*}
Now if $\eta$ is a nondegenerate solution of $\CD\eta=0$, with
$\nabla\restr{\cH}\eta=\trl(X^\nabla)$ for some Weyl connection $\nabla$ and
$X^\nabla\in\Gamma(\cH\otimes\cL)$, we may take $\Upsilon =
-\sharp_\eta^{-1}(X^\nabla)$ to obtain
\begin{equation*}
\hat\nabla\restr{\cH}\eta
=\trl(X^\nabla)+\trl(\sharp_\eta(\Upsilon))=0.
\end{equation*}
Hence $\eta$ is (inverse to) a horizontal compatible metric, up to the shift
of the weight via the line bundle $\cL$. Finally, the nondegenerate weighted
metric $\eta$ allows us to build a nonvanishing section $\sigma$ of the line
bundle $\Wedge^m \cH\otimes\cL^{m/2}$, where $m=\dim \h$, with
$\hat\nabla\restr{\cH}\sigma=0$. This line bundle cannot be trivial because
the central weight of $\cB\otimes\cL$ is not zero. If $\h$ is absolutely
irreducible, then $\Wedge^m\cH\otimes\cL^{m/2}\cong \cL^k$ for some nonzero
$k$, and then $\psi=(\sigma^{-1/k}\eta)^{-1}$ is a section of $\cB^*$ with
$\hat\nabla\restr{\cH}\psi=0$. Otherwise, we need to assume the central
weights of $\Wedge^m\cH$ and $\cL$ are linearly dependent. The most natural
way to achieve this is to suppose that the simple roots $\alpha,\beta$ with
$\h\otimes\C=\h_\alpha\oplus\h_\beta$ are related by an automorphism of the
Dynkin diagram of $\g$.

\begin{defn}  A $\p_0$-submodule $B\leq S^2\h$ satisfies the
\emph{strong algebraic linearization condition} (strong ALC) if and only if
$B$ satisfies the ALC with respect to $\p_0$-submodules $\h_i\leq\h$ such that
whenever $\h_i\otimes\C=\h_\alpha\oplus\h_\beta$ for $\alpha,\beta\in\Sig_0$,
there is an automorphism of the Dynkin diagram of $\g$ preserving $\Sig_0$ and
interchanging $\alpha$ and $\beta$.
\end{defn}

With this definition, the linearization method yields the following result.

\begin{thm}\label{alt} Let $B\leq S^2\h$ satisfy the strong ALC with respect
to $B = \bigoplus_{i=1}^r B_i$ and $\h = \bigoplus_{i=1}^r \h_i$. Then for all
$i\in\{1,\ldots r\}$ there are induced line bundles $\cL_i$ and invariant
first order linear operators $\cD_i$ acting on sections of $\cB_i \otimes
\cL_i$ such that there is a bijection between nondegenerate solutions
$\eta_i:i\in\{1,\ldots r\}$ of the equations $\cD_i (\eta_i) = 0$, and
nondegenerate sections $\psi$ of $\cB^*$ with $\nabla\restr{\cH} \psi = 0$ for
some Weyl connection $\nabla$.
\end{thm}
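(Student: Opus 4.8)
The plan is to reduce Theorem~\ref{alt} to the irreducible case already treated in \S\ref{ss:lp}, and then to assemble the pieces. First I would observe that, by the strong ALC, the decompositions $\h=\bigoplus_i\h_i$ and $B=\bigoplus_i B_i$ are $\p_0$-invariant, each $B_i\leq S^2\h_i$ is irreducible and satisfies the ALC, and moreover each $\h_i\otimes\C$ has at most two irreducible components $\h_\alpha\oplus\h_\beta$ related by a Dynkin diagram automorphism preserving $\Sig_0$ (Remark~\ref{r:ALC}). Thus the restriction $b_i\colon\h^*\otimes B_i\to\h_i$ of the contraction $c$ is surjective, and via Proposition~\ref{p:fos} there is a unique line bundle $\cL_i$ (a power of a weight bundle of $\z(\p_0)$) and a unique invariant first order operator $\cD_i=\pi_i\circ\nabla\restr\cH$ on $\Gamma(\cB_i\otimes\cL_i)$, where $\pi_i=\iden-\trl_i\circ b_i$ projects onto $\ker b_i$; when $\dim B_i=1$ this is the zero operator on a section of $\cB_i$ with trivial $\cL_i$. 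By the ALC, $\ker b_i$ is a sum of Cartan products of summands of $\h^*$ and $B_i$, so $\cD_i$ is the first BGG operator.

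Next I would run the linearization argument of \S\ref{ss:lp} componentwise. Given nondegenerate solutions $\eta_i$ of $\cD_i\eta_i=0$, for any Weyl connection $\nabla$ we have $\nabla\restr\cH\eta_i=\trl_i(X_i^\nabla)$ for sections $X_i^\nabla$ of $\h_i\otimes\cL_i$. The crucial point is that under a change $\hat\nabla\restr\cH=\nabla\restr\cH+\Upsilon$, the algebraic action $\abrack{\cdot,\Upsilon}\act\eta_i$ stays in the image of $\trl_i$ by invariance of $\cD_i$, and Schur's lemma together with~\cite{SS} gives $\abrack{\cdot,\Upsilon}\act\eta_i=\trl_i(\sharp_{\eta_i}(\Upsilon))$ for an invertible map $\sharp_{\eta_i}$ on $\cH^*$ built from the nonzero scalars $\ell_\alpha$. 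Here I must be slightly careful: $\eta_i$ lies in $B_i\leq S^2\h_i$, so $\abrack{\cdot,\Upsilon}\act\eta_i$ only sees the component of $\Upsilon$ pairing with $\h_i$; but $\sharp_{\eta_i}$ restricted to the corresponding summand $\h_i^*$ of $\cH^*$ is still invertible (nondegeneracy of $\eta_i$ plus $\ell_\alpha\neq0$), so I can solve $X_i^\nabla+\sharp_{\eta_i}(\Upsilon_i)=0$ for the relevant component $\Upsilon_i$ of $\Upsilon$. Summing over $i$ produces a single $1$-form $\Upsilon$ with $\hat\nabla\restr\cH\eta_i=0$ simultaneously for all $i$, hence $\hat\nabla\restr\cH\eta=0$ where $\eta=\sum_i\eta_i$.

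Then I would convert the weighted inverse metric $\eta$ into an unweighted section $\psi$ of $\cB^*$. For each $i$, nondegeneracy of $\eta_i$ yields a $\hat\nabla\restr\cH$-parallel nonvanishing section $\sigma_i$ of $\Wedge^{m_i}\cH_i\otimes\cL_i^{m_i/2}$ with $m_i=\dim\h_i$; this line bundle is nontrivial since the central weight of $\cB_i\otimes\cL_i$ is nonzero. When $\h_i$ is absolutely irreducible, $\Wedge^{m_i}\cH_i\otimes\cL_i^{m_i/2}\cong\cL_i^{k_i}$ for some $k_i\neq0$ and $\psi_i=(\sigma_i^{-1/k_i}\eta_i)^{-1}$ is a $\hat\nabla\restr\cH$-parallel section of $\cB_i^*$; when $\h_i\otimes\C=\h_\alpha\oplus\h_\beta$, the Dynkin automorphism provided by the strong ALC forces the central weights of $\Wedge^{m_i}\cH_i$ and $\cL_i$ to be linearly dependent, which is exactly what is needed to extract the right root and again build $\psi_i$. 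Setting $\psi=\bigoplus_i\psi_i\in\Gamma(\cB^*)$ gives a nondegenerate section with $\nabla'\restr\cH\psi=0$ for the Weyl connection $\nabla'=\hat\nabla$. Conversely, given such a $\psi$, inverting on each $\cB_i^*$ and rescaling by the appropriate power of the induced horizontal volume recovers a weighted $\eta_i$ with $\nabla'\restr\cH\eta_i\in\image\trl_i$, hence $\cD_i\eta_i=\pi_i(\nabla'\restr\cH\eta_i)=0$; nondegeneracy is preserved throughout. The two constructions are mutually inverse by the uniqueness in Schur's lemma, establishing the bijection.

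The main obstacle I anticipate is the bookkeeping in the non-absolutely-irreducible summands: one must check that the Dynkin diagram automorphism interchanging $\alpha$ and $\beta$ genuinely identifies the central weight of $\Wedge^{m_i}\cH_i$ with a rational multiple of that of $\cL_i$, so that $\psi_i$ is well defined as a section of an honest (integral) line bundle, and that the resulting $\psi_i$ does not depend on the auxiliary choices of $\sigma_i$ and of the root extracted. Everything else—surjectivity of $b_i$, invertibility of $\sharp_{\eta_i}$ on the relevant summand, invariance of $\cD_i$—is either in Remark~\ref{r:ALC}, Proposition~\ref{p:fos}, or the component-wise repetition of \S\ref{ss:lp}.
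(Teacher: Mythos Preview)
Your overall strategy matches the paper's: build the operators $\cD_i$ componentwise via Proposition~\ref{p:fos}, find a single Weyl connection $\hat\nabla$ making all the $\eta_i$ horizontally parallel, then strip the weights using volume forms. The first two steps are essentially correct. (Your justification that ``$\abrack{\cdot,\Upsilon}\act\eta_i$ only sees the component of $\Upsilon$ pairing with $\h_i$'' is imprecise, since $\eta_i$ lives in $\cB_i\otimes\cL_i$ and the central weight of $\cL_i$ is sensitive to all of $\z(\p_0)$; the correct reason is that $b_i(\abrack{\cdot,\Upsilon_k}\act\eta_i)$ is a $\p_0$-map $\h_k^*\otimes B_i\to\h_i$, which vanishes for $k\neq i$ because the central weights cannot match. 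Combined with $\image\trl_i\cap\ker b_i=0$, this gives the decoupling.)

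The genuine gap is in the rescaling step. You assert that when $\h_i$ is absolutely irreducible, $\Wedge^{m_i}\cH_i\otimes\cL_i^{m_i/2}\cong\cL_i^{k_i}$, and then use only $\sigma_i$ to remove the twist from $\eta_i$. This is lifted verbatim from \S\ref{ss:lp}, but there it works only because $|\Sig_0|=1$, so $\t_0^*$ is one-dimensional and any two nonzero central weights are proportional. Once $r>1$ we have $\dim\t_0^*=|\Sig_0|\geq r>1$, and the weight of $\cL_i$, being determined by the full system of equations in Proposition~\ref{p:fos} (one equation for \emph{each} $\alpha\in\Sig_0$, not just those in $\Sig_0^i$), is a generic element of $\t_0^*$; it has no reason to lie in the line spanned by the weight of $\Wedge^{m_i}\h_i$. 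The paper's remedy is to use \emph{all} of the volume forms at once: the weights of the $\sigma_j$ are linearly independent, and the strong ALC guarantees that the central weights of the $\cL_i$ lie in the span of the central weights of the $\Wedge^{m_j}\cH_j$, so one can solve the linear system $\eta_i\otimes\bigotimes_j\sigma_j^{a_{ij}}\in\Gamma(\cB_i)$. This cross-coupling among the $\sigma_j$ is exactly where the strong ALC does its work in the general case, and it is absent from your argument; in particular, your anticipated ``main obstacle'' is misplaced---the difficulty is not confined to the non-absolutely-irreducible summands but is already present whenever $r>1$.
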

\begin{proof} Define $b_i,\trl_i$ as in Remark~\ref{r:ALC} so
that $\h^*\tens B_i =\ker b_i\dsum\trl_i(\h_i)$, let $\pi_i
=\iden_{\h^*\otimes B_i}-\trl_i\circ b_i$ be the projection onto $\ker b_i$,
and let $\Sig_0^i=\{\alpha\in\Sig_0: V_\alpha\sub \h_i^*\tens\C\}$.  We apply
the same ideas as in the absolutely irreducible case to each irreducible
component $V_{\lam'}$ of $B_i\tens\C$.  If $\dim V_{\lam'}\geq 2$ then the ALC
implies that $(V_\alpha\tens V_{\lam'})\cap(\ker b_i\tens\C)$ is irreducible
for all $\alpha\in\Sig_0$, hence Proposition~\ref{p:fos} provides a unique
$\lam^0$ so that there is an invariant first order operator between sections
of the bundles induced by $V_\lam$ and $\ker b_i\tens\C$.  If instead,
$\dim V_{\lam'}=1$, then $V_\alpha\tens V_{\lam'}$ is irreducible, and is
contained in $\ker b_i\tens\C$ unless $\alpha\in\Sig_0^i$. We thus
supplement~\eqref{eq:cas} by the equations $(\lam^0,\alpha^0)=0$ when
$\alpha\in\Sig_0^i$.

Since $B_i$ is irreducible, $B_i\tens\C$ is either irreducible or has two
irreducible components with conjugate weights.  Now the system of
equations~\eqref{eq:cas} and $(\lam^0,\alpha^0)=0$ that we impose to find
$\lam^0$ are conjugation invariant.  Hence in either case, we obtain a line
bundle $\cL_i$ and an invariant first order linear operator
$\CD_i:=\pi_i\circ\nabla\restr{\cH}$ on $\cB_i\otimes\cL_i$, so that any
section $\eta_i$ satisfies $\CD_i(\eta_i)=0$ if and only if
\begin{equation*} 
\nabla\restr{\cH}\eta_i=\trl_i(X_i^\nabla)
\end{equation*}
for a suitable section $X_i^\nabla$ of $\cH_i\otimes\cL_i$. Given such
sections $\eta_i$, let $\eta=\sum_{i=1}^r \eta_i$. By construction, the
operator $b_i\circ\nabla\restr{\cH}$ is not invariant on
$\cB_i\otimes\cL_i$. Hence by Schur's Lemma, there are \emph{nonzero} scalars
$\ell_\alpha$ such that
\begin{equation*}
\abrack{\cdot,\Upsilon}\act\eta= (\trl\circ b)
(\abrack{\cdot,\Upsilon}\act\eta)= (\trl\circ b)
\Bigl({\textstyle\sum}_{i=1}^r{\textstyle\sum}_{\alpha\in\Sig_0^i} \ell_\alpha
\Upsilon_\alpha \tens\eta_i\Bigr)=
\Bigl({\textstyle\sum}_{\alpha\in\Sig_0} \ell_\alpha\Upsilon_\alpha \tens\eta\Bigr),
\end{equation*}
where $\Upsilon=\sum_{\alpha\in \Sig_0} \Upsilon_\alpha$ as before. As before,
we define $\sharp_\eta(\Upsilon)=\sum_{\alpha\in\Sig_0}\ell_\alpha
b(\Upsilon_\alpha\tens\eta)$, so that if
$\hat\nabla\restr{\cH}=\nabla\restr{\cH}+\Upsilon$ then
\begin{equation*}
\hat\nabla\restr{\cH}\eta=\nabla\restr{\cH}\eta
+\trl(\sharp_\eta(\Upsilon)).
\end{equation*}
If $\eta$ is a nondegenerate then $\sharp_{\eta}$ is invertible, and so if
$\CD(\eta)=0$, i.e., $\CD_i(\eta_i)=0$ for all $i$, then we may set
$\Upsilon:=-\sharp_{\eta}^{-1}(X^\nabla)$, where $X^\nabla=\sum_{i=1}^r
X_i^\nabla$ to obtain $\hat\nabla\restr{\cH}\eta=0$.

Finally, taking volume forms of $\eta_i$ on $\cH_i$ for each $i$, we obtain
nonvanishing sections $\sigma_i$ of $\Wedge^{m_i}\cH_i\otimes\cL_i^{m_i/2}$ with
$\hat\nabla\restr{\cH}\sigma_i=0$. The weights of the $\sigma_i$ are
linearly independent, and the strong ALC ensures that the central weights of
the $\cL_i$ are linear combinations of the central weights of
$\Wedge^{m_j}\cH_j$, so for every $i$, we can solve the linear system
$\eta_i\otimes\bigotimes_j \sigma_j^{a_{ij}}\in \cB_i$, and hence, inverting
each component, obtain the section $\psi$ of $\cB^*$ as required. Since the
system is invertible, $\eta$ can be obtained from $\psi$ and its volume forms 
on each $\cH_i$.
\end{proof}

If only the ALC is assumed, then the proof yields, in place of horizontally
parallel metrics on $\cH$, horizontally parallel conformal structures on each
$\cH_i$ and horizontally parallel sections of some line bundles.

\subsection{Example: projective geometry}

Let us illustrate the metrizability procedure by showing how the well-known
example of projective geometry~\cite{DM,EM,Liouville,Sinjukov} fits into the
general method.  Here $\g=\sgl(n+1,\R)= \h\oplus\gl(\h)\oplus\h^*$ and $S^2\h$
is irreducible. Since $\h^*\tens S^2\h \cong \h\oplus (\h^*\tens_0 S^2\h)$,
where the second summand is the trace-free part (the Cartan product),
$B=S^2\h$ satisfies the ALC.  The class of covariant derivatives defining the
projective structure depends on an arbitrary $1$-form $\Upsilon_a$ and two of
them are related by~\eqref{projective-change}. Hence on a section $\varphi$ of
$\cB=S^2TM$, we have
\[
\abrack{Z,\Upsilon}\act\varphi=
2\Upsilon(Z)\varphi+Z\tens\varphi(\Upsilon,\cdot)+\varphi(\Upsilon,\cdot)\tens
Z
\]
for any vector field $Z$ and $1$-form $\Upsilon$. If we twist by the line
bundle $\cL$ induced by the $P_0$-module $L$ with highest weight $-2\omega_1$,
then for $\eta \in \Gamma(\cB\otimes\cL)$ and $\hat\nabla=\nabla+\Upsilon$, we
have
\[
\hat\nabla_Z\eta=\nabla_Z\eta+b(\Upsilon\tens\eta)\odot Z
\]
where $X\odot Z=X\tens Z+Z\tens X$ and
$b(\Upsilon\tens\eta)=\eta(\Upsilon,\cdot)$ is the natural contraction.  In
abstract indices this contraction of $\Upsilon_c\eta^{ab}$ is
$\Upsilon_a\eta^{ab}$ and hence
\[
\hat\nabla_c\eta^{ab}=\nabla_c\eta^{ab}
+\delta_c^a\Upsilon_d\eta^{bd} +\delta_c^b\Upsilon_d\eta^{ad}.
\]
We thus have an invariant first order operator acting on $\eta$ (a first
BGG operator) whose solutions satisfy
\[
\nabla_Z\eta=\langle Z,\trl(X^\nabla)\rangle=\tfrac{1}{n+1} X^\nabla\odot Z.
\]
for some section $X^\nabla$ of $TM\tens\cL$. (Here
$\trl(X)=\frac1{n+1}X\odot\iden$, or in abstract indices,
$\trl(X^a)=\frac{1}{n+1}\,\bigl(X^a\delta^b_c+X^b\delta^a_c\bigr)$ so that
$b(\trl(X))=X$.) Evidently $\eta$ is parallel for $\hat\nabla$ provided
$b(\Upsilon\tens\eta)=-\frac1{n+1} X^\nabla$, which we can solve for
$\Upsilon$ if $\eta$ is nondegenerate. Direct computation shows that
$\det(\eta)$ is a section of $\cL^{-2}$. So $g^{ab}:=\det(\eta)\eta^{ab}$ is a
nondegenerate section of $S^2TM$ and its inverse is parallel with respect to
$\hat{\nabla}$. In terms of the general theory herein, if
\[
\abrack{\cdot,\Upsilon}\act \eta=
\ell\,(\trl\circ b)(\Upsilon\otimes\eta),
\]
then
\[
b(\Upsilon\tens\eta)\odot Z = \abrack{Z,\Upsilon}\act \eta
= \ell\,\langle Z,(\trl\circ b)(\Upsilon\otimes\eta)\rangle=
\frac{\ell}{n+1} b(\Upsilon\otimes\eta)\odot Z,
\]
and so $\ell=n+1$. Hence $\sharp_\eta(\Upsilon)=(n+1) b(\Upsilon\otimes\eta)$
and the solution is $\Upsilon=-\sharp_\eta^{-1}(X_\nabla)$.

\subsection{The metric tractor bundle}\label{ss:mtb} As we have seen
in \S\ref{ss:mhm}, the homogeneous model $G/P$ is always locally metrizable,
and solutions in the kernel of a given first BGG operator are induced by
parallel sections of a corresponding \emph{metric} tractor bundle. In general,
if $M$ has nontrivial curvature, not all solutions to a linearized
metrizability problem will correspond to such parallel sections: as discussed
in \S\ref{ss:1BGG}, those that do are called \emph{normal solutions} and
exhibit special features. In particular, as shown in~\cite{CGH}, they are
always of a simple polynomial forms in normal coordinates, exactly as in the
homogeneous model. Thus the explicit formulae from the homogeneous case form
an ansatz for solutions in general.

Let us discuss this in the case of free distributions from~\S\ref{ss:filt}.
Here $\g=\so(n+1,n)= \Wedge^2\h\oplus\h\oplus
\gl(\h)\oplus\h^*\oplus\Wedge^2\h^*$, $\cB=S^2\h$ is irreducible and satisfies
the ALC, just as in the case of projective geometry.  In this case, however,
there is no need to twist by a line bundle, since by~\eqref{free-dist-change},
we already have
\[
\hat\nabla_Z\eta=\nabla_Z\eta+b(\Upsilon\tens\eta)\odot Z
\]
for any sections $Z$ of $\cH$ and $\eta$ of $S^2\h$, where
$\hat\nabla\restr{\cH}=\nabla\restr{\cH}+\Upsilon$ and $b$ is the natural
contraction. The solution of the linearized metrizability problem then
proceeds exactly as in the projective case, so we now consider the form of the
normal solutions.

The \emph{standard tractor bundle} is the bundle associated to the defining
representation $V$ of $G=\SO(n+1,n)$.  Explicitly, using the matrix
description in~\S\ref{ss:filt}, we may write elements of $V$ as column vectors
\[
v=\begin{pmatrix} \lambda^a\\ \tau\\ \ell_a
\end{pmatrix}
\]
on which the action of the nilpotent radical $\m$ of $\p^{\mathrm{op}}$ is
given by
\[
\bx\act v=\begin{pmatrix}
0&x^a&y^{ab}\\0&0&-{x}^a\\
0&0&0
\end{pmatrix} 
\begin{pmatrix}
\lambda^b\\  \tau\\ \ell_b
\end{pmatrix}=
\begin{pmatrix}
x^a\tau+y^{ab}\ell_b\\ -{x}^b\ell_b\\ 0
\end{pmatrix}.
\]
The metric tractor bundle in this example is associated to the symmetric
tracefree square $S^2_0V$ of $V$. Elements of the symmetric square
$S^2V$ are given by
\[
\Phi=\begin{pmatrix}
\nu^{ab}\\
\sigma^b\\
\kappa\,|\,\psi^c_b\\
\xi_b\\
\tau_{bc}
\end{pmatrix},
\]
where $\nu^{ab}$ and $\tau_{bc}$ are symmetric, and such and element is in
$S^2_0V$ if $\kappa=-\psi^c_c$. Our convention is such that $\Phi=v\odot
\tilde v$ has components
\begin{gather*}
\nu^{ab}= \lambda^{a}\tilde\lambda^{b} + \tilde\lambda^{a}\lambda^{b};\quad
\sigma^b=\lambda^b\tilde\tau+\tau\tilde\lambda^b ;\quad
\kappa=\tau\tilde\tau;\quad
\psi_b^c=\ell_{b}\tilde\lambda^c+
\lambda^{c}\tilde\ell_{b} ;\\
\xi_b=\ell_{b}\tilde\tau+\tau\tilde\ell_{b} ;\quad
\tau_{bc}= \ell_{a}\tilde\ell_{b}+\tilde\ell_{a}\ell_{b}.
\end{gather*}
The action of the nilpotent radical on the symmetric square is given by
\[
\bx\act\Phi:=
\begin{pmatrix}
0&x^a&y^{ab}\\0&0&-{x}^a\\
0&0&0
\end{pmatrix}  
\begin{pmatrix}
\nu^{ab}\\
\sigma^b\\
\kappa\,|\,\psi^c_b\\
\xi_b\\
\tau_{bc}
\end{pmatrix}
=\begin{pmatrix}
x^{(a}\sigma^{b)}-y^{c(a}\psi^{b)}_c\\
{x}^c \psi_c^b+y^{bc}\xi_c-{x}^b\kappa\\
-{x}^b\xi_b\,|\,x^c\xi_b \\
-{x}^a\tau_{ab}\\
0
\end{pmatrix},
\]
where $x^{(a}\sigma^{b)}
=x^a\otimes\sigma^b+\sigma^b\otimes x^a$.
The iterated action is therefore given by
\begin{gather*}
\bx\act \bx\act\Phi=
\begin{pmatrix}
{x}^c x^{(a} \psi_c^{b)}+2x^{(a} y^{b)c}\xi_c
-x^{a}{x}^{b}\kappa \\
2{x}^c x^b\xi_c -y^{bc}{x}^a\tau_{ac}\\
{x}^b{x}^a\tau_{ab}\,|\,-x^c {x}^a\tau_{ab}\\
0\\
0
\end{pmatrix},\\
\bx\act \bx\act \bx\act\Phi=
\begin{pmatrix}
4x^{a} x^{b}x^c\xi_c
-2x^{(a} y^{b)c}{x}^d\tau_{dc} \\
-2x^b x^a x^c\tau_{ac}\\
0\,|\,0\\
0\\
0
\end{pmatrix},\qquad
\bx\act \bx\act \bx\act \bx\act\Phi=
\begin{pmatrix}
-4x^{a}{x}^{b}{x}^{c}{x}^{d}\tau_{cd}\\	
0\\
0\,|\,0\\
0\\
0
\end{pmatrix}
\end{gather*}
with all further iterates zero. The normal solution is the projection onto
$S^2\cH$ of $\exp(\bx)\cdot\Phi$, which is given by
\begin{multline*}
\eta^{ab}(x,y)=
\nu^{ab} +x^{(a}\sigma^{b)}-y^{c(a}\psi^{b)}_c
+\tfrac 12 {x}^c x^{(a} \psi_c^{b)}+x^{(a} y^{b)c}\xi_c
+\tfrac12 x^{a}{x}^{b}\psi^c_c\\
+\tfrac23 x^{a} x^{b}x^c\xi_c
-\tfrac13 x^{(a} y^{b)c}{x}^d\tau_{dc}
-\tfrac16 x^{a}{x}^{b}{x}^{c}{x}^{d}\tau_{cd}.
\end{multline*}

\section{Classification of metric parabolic geometries with irreducible $\h$}
\label{s:class}

We have seen that the linearizability problem of the existence of compatible
subriemannian metrics on parabolic geometries reduces to a purely algebraic
question related to the number of components in certain tensor products of the
$\p_0$-modules $\h$ and its dual $\h^*$. In fact, we are only interested in
the actions of the semisimple part of $\p_0=\p/\p^\perp$.
 
In this section, we classify all cases of the ALC where the defining
distribution of the parabolic geometry corresponding to $\h$ is
irreducible. This is the case with all $|1|$-graded geometries, but many
$|2|$-graded and some more general geometries are involved too. In order to
keep the story short, while still providing a complete and simple picture, we
use the schematic description of the chosen type of parabolic subalgebra $\p$
of $\g$ by crosses on the Dynkin diagram for $\g$ and we write weights of
$\p$-modules as linear combinations of the fundamental weights for $\g$,
depicted as the nonzero coefficients over the nodes of the diagrams, ignoring
those over the crossed nodes (see e.g. \cite[\S3.2]{CS} for these
conventions). This exactly provides the complete information on the
representation of the semisimple part of $\p_0$ in the case of complex
algebras and we always add further information on specific real forms of them.
Actually for practical reasons (and in accordance with common practice), we
rather write the weights of the dual $\p_0$-modules over the Dynkin diagram.
Moreover, the displayed diagrams and weights always correspond to the
complexified versions and thus we have to keep in mind their meaning for
particular real forms.

The classification is given in the following theorem. In the proof we also
describe the geometric properties of the metrics in any admissible component
$B$, mostly in terms of special structure related to the given parabolic
geometry. The classification in Table 1 was also obtained in \cite{P}.

\begin{thm}\label{main} Let $\p$ be a parabolic subalgebra in a real simple
Lie algebra $\g$ and let $B$ be a $\p$-submodule of $S^2\h$, with
$\h\cong(\p^\perp/[\p^\perp,\p^\perp])^*$ irreducible. Then $B$ satisfies the
ALC and admits nondegenerate elements if and only if one of the following
holds\textup:
\begin{itemize}
\item $\g$ is complex and the complexification of $(\p,B)$ appears in
  Table~\textup{\ref{t:hermitian};}
\item $(\g,\p,B)$ appears as a real
form in Table~\textup{\ref{t:absirred}} or~\textup{\ref{t:red};} 
\item $(\g,\p,B)$ is \textup(the underlying real Lie algebra of) the
complexification of a triple appearing in Table~\textup{\ref{t:absirred}}.
\end{itemize}
\begin{table}[!ht]
\begin{tabular}{|l|l|l|l|l|}
\hline
Case& Diagram $\Delta_\rk$ for $\p,B$ & Real simple $\g$ & Growth \\
\hline
$A_\rk^{h}$&$\dyn \root{1\strut}\link\root{}\dots\noroot{}\edyn\;\;
\dyn \noroot{}\link\root{}\dots\root{1}\edyn$&
$\sgl(\rk+1,\C)\;\; \rk\geq 2$ & $2\rk$\\
\hline
$B_\rk^{h}$&
$\dyn \root{1\strut}\link\root{}\dots\root{}\llink>\noroot{}\edyn\;\;
\dyn \noroot{}\llink<\root{}\link\root{}\dots\root{1}\edyn$&
$\so(2\rk+1,\C)\;\; \rk\geq 2$ &$2k, 2k+k(k-1)$\\
\hline
$G_2^{h}$&$\dyn \noroot{}\lllink<\root{1\strut}\edyn\;\;
\dyn \root{1}\lllink>\noroot{}\edyn$& $G_2^\C$&$4,6,10$\\
\hline
\end{tabular}
\smallbreak
\caption{Complex geometries with hermitian $B$}\label{t:hermitian}
\end{table}
\begin{table}[!ht]
\begin{tabular}{|l|l|l|l|l|}
\hline
Case& Diagram $\Delta_\rk$ for $\p,B$ & Real simple $\g$ & Growth \\
\hline
$A_\rk^{1,1}$&$\dyn \noroot{}\link\root{}\dots\root{2\strut}\edyn$&
$\sgl(\rk+1,\R)\;\; \rk\geq 2$ & $\rk$\\
\hline
$A_\rk^{1,2}$&$\dyn\root{}\link\noroot{}\link\root{}
\dots\root{}\link\root{1\strut}\link\root{}\edyn$& 
\tabv{\sgl(\rk+1,\R),\,\sgl(p+1,\HQ)}
{\rk=2p+1 ,\, p\geq2}&$4p$\\
\hline
$B_\rk^{1,k}$&$\dyn\root{2\strut}\link\root{}
\dots\root{}\link\nodroot{k\geq2}\link\root{}\dots\root{}\llink>\root{}\edyn$&
\tabv{\so(p,q),\;k\leq p\leq q}{p+q=2\rk+1}&\tabv{d=k(2\rk-2k+1),}
{n=d+\frac12 k(k-1)}\\
\hline
$B_{\rk}^{1,\rk}$&$\dyn \root{2\strut}\link\root{}\dots\root{}\llink>\noroot{}\edyn$&
$\so(\rk,\rk+1)\; \rk\geq 2$&$k,k+\frac12 k(k-1)$\\
\hline
$C_4^{1,2}$&$\dyn \root{}\link\noroot{}\link\root{}\llink<\root{1\strut}\edyn$&
\tabv{\symp(8,\R)}{\symp(2,2)\;\;\symp(1,3)}&$8,11$\\
\hline 
$C_\rk^{1,k}$&$\dyn\root{}\link\root{1\strut}\dots
\root{}\link\nodroot{k=2j\geq 4}\link\root{}\dots\root{}\llink<\root{}\edyn$&
\tabv{\symp(2\rk,\R)\;\;\;\symp(p,q)}{\quad\;\rk=p+q,\; k \leq p\leq q}
&\tabv{d=k(2\rk-2k),}{n=d+\frac12 k(k+1)}\\
\hline
$D_\rk^{1,k}$&$\dyn \root{2\strut}\link\root{}
\dots\root{}\link\nodroot{k\geq 2}\link\root{}
\dots\root{}\rootupright{}\rootdownright{}\edyn$&
\tabv{\so(p,q)\qquad\quad \so^*(2\rk)}
{\begin{matrix}2\rk=p+q\\
k\leq p\leq q\end{matrix}\quad \begin{matrix}k=2j\\ k\leq \rk-2\end{matrix}}
&\tabv{d=k(2\rk-2k),}{n=d+\frac12 k(k-1)}\\
\hline
$E_6^{1,1}$& $\dyn\noroot{}\link\root{}\link\root{}\rootdown{}
\link\root{}\link\root{1\strut}\edyn$& $E_{6(6)}$, $E_{6(-26)}$ &$16$\\
\hline
$G_2^{1,1}$&$\dyn \noroot{}\lllink<\root{2\strut}\edyn$& $G_{2(2)}$&$2,3,5$\\
\hline
\end{tabular}
\smallbreak
\caption{Real geometries with absolutely irreducible $\h$}
\label{t:absirred}
\end{table}

\begin{table}
\begin{tabular}{|l|l|l|l|l|}
\hline
Case& Diagram $\Delta_\rk$ for $\p,B$ & Real simple $\g$ & Growth \\
\hline
$A_3^{2,1}$&$\dyn \noroot{}\link\root{2\strut}\link\noroot{}\edyn$&
$\su(1,3),\;\su(2,2)$&$4,5$\\
\hline
$A_\rk^{2,k}$&$\dyn\root{1\strut}\dots\root{}\link\nodroot{k\geq 2}\link\root{}
\dots\root{}\link\nodroot{\rk-k}\link\root{}\dots\root{1\strut}\edyn$&
\tabv{\su(p,q),\;k\leq p \leq q}{\rk=p+q-1\geq 4}
&\tabv{d=2k(\rk-2k+1),}{n=d+k^2}\\
\hline
$A_\rk^{2,h}$&
$\begin{matrix}
\dyn\root{}\link\noroot{}\link\root{}\link\root{\strut1}
	\dots\root{}\link\noroot{}\link\root{}\edyn \\
	\oplus\\
	\dyn\root{}\link\noroot{}\link\root{}\dots\root{1}
	\link\root{}\link\noroot{}\link\root{}
	\edyn
\end{matrix}$&
\tabv{\su(p,q),\;2\leq p\leq q}{\rk=p+q-1\geq 6}&$4(\rk-3),4(\rk-2)$\\
\hline
$A_{2k+1}^{2,s}$&
$\begin{matrix}
\dyn\root{}\link\root{\strut1}\dots\root{}\link\noroot{}\link\root{}
\link\noroot{}\link\root{}\dots\root{}\link\root{}\edyn\\
 \oplus\\
\dyn\root{}\link\root{}\dots\root{}\link\noroot{}\link\root{}
\link\noroot{}\link\root{}\dots\root{1}\link\root{}\edyn 
\end{matrix}$&
\tabvvv{\su(k,k+2),}{\su(k+1,k+1)}{\rk=2k+1\geq 7}
 &$4k,4k+k^2$
\\
\hline
$A_{2k}^{2,s}$&$\begin{matrix}{\dyn\root{2\strut}\link\root{}\dots\root{}\link
\noroot{}\link\noroot{}\link\root{}\dots\root{}\link\root{}\edyn}\\
\oplus\\
{\dyn \root{}\link\root{}\dots\root{}\link\noroot{}\link\noroot{}
\link\root{}\dots\root{}\link\root{\smash{2}}\edyn}\end{matrix}$
& \tabv{\su(k,k+1)}{\rk=2k\geq 4}&$2k,2k+k^2$\\
\hline
$D_\rk^{2,s}$&$\dyn \root{2}\link\root{}\dots\root{}
\norootupright{}\norootdownright{}\edyn$&
\tabv{\so(\rk-1,\rk+1)}{\so^*(2\rk),\;\rk=2j+1}&\tabv{d=2(\rk-1),}
{d+\frac12(\rk-1)(\rk-2)}\\
\hline
$D_\rk^{2,h}$&$\dyn \root{}\link\root{1}\dots\root{}
\norootupright{}\norootdownright{}\edyn$&
\tabv{\so(\rk-1,\rk+1)}{\so^*(2\rk),\; \rk=2j+1}&\tabv{d=2(\rk-1),}
{d+\frac12(\rk-1)(\rk-2)}\\
\hline
$E_6^{2,h\vphantom{{}^2}}$&$\dyn\noroot{}\link\root{}\link\root{}\rootdown{1\strut}
\link\root{}\link\noroot{}\edyn$ & $E_{6(2)}$&$16,24$\\
\hline
\end{tabular}
\smallbreak
\caption{Real geometries with $\h$ not absolutely irreducible}
\label{t:red}
\end{table}
\end{thm}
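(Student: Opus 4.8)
The plan is to reduce the ALC to a finite list of representation-theoretic checks indexed by marked Dynkin diagrams, and then to enumerate. Throughout, as noted before the statement, only the action of $\p_0^{ss}=[\p_0,\p_0]$ is relevant: the ALC is phrased entirely in terms of the number of irreducible summands in tensor products of $\h$, $\h^*$ and $B$, while the central weights serve only to fix the twisting line bundle through Proposition~\ref{p:fos}. \emph{First reduction: which $(\g,\p)$ occur.} Since $\h\isom(\p^\perp/[\p^\perp,\p^\perp])^*$ is, as a $\p_0$-module, dual to the abelianized nilradical, and the latter decomposes into one irreducible summand generated by each crossed simple root, the number of irreducible summands of $\h\tens\C$ equals $|\Sig_0|$, the number of crossed nodes. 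Hence $\h$ is absolutely irreducible precisely when $\p$ is defined by a single crossed node, and $\h$ is irreducible but not absolutely so precisely when there are two crossed nodes whose summands are complex conjugate over $\C$; this forces the two nodes to be interchanged by a nontrivial automorphism of the Dynkin diagram of $\g_\C$ realized by the real structure (the interchange of the two copies in $\g\tens_\R\C\isom\g\dsum\bar\g$ when $\g$ is itself complex, a genuine diagram symmetry of types $A_\rk,D_\rk,E_6$ otherwise). This reduces the problem to the one-crossed-node parabolics in each complex simple $\g$, together with the finitely many two-crossed-node parabolics symmetric under an involution, and, over each of these, to the admissible real forms.

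\emph{Second reduction: which $B$ occur, and the ALC restated.} Fix such a $(\g,\p)$ and a $\p$-submodule $B\leq S^2\h$ with a nondegenerate element. By Remark~\ref{r:ALC}, $B$ lies in a single $\t_0$-weight space; when $\h$ is absolutely irreducible this forces $B$ to be an irreducible component of $S^2\h$ containing a nondegenerate element, with $B\tens\C$ irreducible, and when $\h$ has two conjugate complex summands $B\tens\C$ is a conjugate pair. In either case the decomposition $\h=\bigoplus\h_i$ is trivial, so the ALC reads: for every $\alpha\in\Sig_0$ and every irreducible component $W$ of $B\tens\C$, the kernel of the contraction $c\colon V_\alpha\tens W\to\h\tens\C$ (the restriction of the substitution map $\gamma\tens\eta\mapsto\eta(\gamma,\,\cdot\,)$) is irreducible or zero. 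The top, Cartan-product summand of $V_\alpha\tens W$ always lies in $\ker c$ (its highest weight is too large to occur in $\h$), so the substance of the condition is that no second summand of $V_\alpha\tens W$ be annihilated by $c$; equivalently, $V_\alpha\tens W$ has essentially at most two components.

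\emph{The enumeration.} For each candidate $(\g,\p)$ one decomposes $S^2\h$ over $\p_0^{ss}$ and picks out the component(s) $B$ carrying nondegenerate elements — in each family there is essentially one natural choice, and requiring that it actually contain a nondegenerate element of the appropriate type is exactly what selects the admissible real forms (the orthogonal, $\so^*$, symplectic, $\symp(p,q)$, unitary, quaternionic and split cases recorded in the tables, with the signature constraints $k\leq p\leq q$, $k=2j$, and so on). One then tests the ALC by a tensor-product computation: Littlewood--Richardson in types $A,B,C,D$, and explicit highest-weight and branching arguments in the fork, triality, $G_2$ and $E_6$ situations. For small $\h$ the Cartan product exhausts $\ker c$, but a second kernel summand of $V_\alpha\tens W$ appears once $\h$ is too large, and this is precisely the mechanism forcing the truncations in Tables~\ref{t:hermitian}--\ref{t:red} (the $C_\rk^{1,2}$ family surviving only at $\rk=4$, the bounds $k\leq\rk-2$ in the $D$-fork entries, the low-rank restrictions in the exceptional rows), leaving exactly the fourteen infinite families and the six exceptional cases $A_3^{2,1}$, $C_4^{1,2}$, $E_6^{1,1}$, $E_6^{2,h}$, $G_2^{1,1}$, $G_2^h$. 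Running the same test over every remaining marked diagram shows the ALC fails, or $S^2\h$ has no component with nondegenerate elements, which is the converse. Along the way one records, for each surviving case, the special structure carried by $\cH$ (a subriemannian metric parallel for some metric Weyl connection, together with the ambient complex, quaternionic or contact structure where present), which is the geometric content of the tables.

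\emph{Main obstacle.} The difficult step is the enumeration: there is no single closed-form branching rule uniformly covering all the semisimple parts $\p_0^{ss}$ that arise, so the ALC must be checked by a patchwork of Littlewood--Richardson computations and case-specific highest-weight arguments, and the delicate point is to organize these so that each infinite series is dispatched at once while the small exceptional members at the ends of the series — exactly where the number of components of $V_\alpha\tens W$ jumps — are not overlooked. A second genuine difficulty is the real-form bookkeeping: deciding which real forms of each complex $(\p,B)$ admit a nondegenerate element and with which signature, and correctly passing between a complex simple algebra and its underlying real algebra (the third bullet of the statement).
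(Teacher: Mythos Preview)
Your outline is correct and follows essentially the same route as the paper: reduce to parabolics with one crossed node (absolutely irreducible $\h$) or two symmetrically placed crossed nodes (irreducible but not absolutely so), decompose $S^2\h$ over $\p_0^{ss}$, and test the ALC by counting irreducible summands of $\h^*\tens B$ case by case through the Dynkin types. The paper's proof differs only in that it actually performs the full enumeration you describe as the ``main obstacle'': subsections organized by type $A_\rk$, $B_\rk$, $C_\rk$, $D_\rk$, and the exceptionals, each displaying the explicit highest-weight decompositions of $S^2\h$ and $\h^*\tens B$ via labelled Dynkin diagrams, so that the reader can verify directly which components $B$ survive and which real forms admit nondegenerate elements.
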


\begin{proof}[Outline of Proof]
In the gradings of the complex algebras $\g$ corresponding to parabolic
geometries, the number of irreducible components of $\h^*$ is equal to the
number of crosses in the Dynkin diagram describing the chosen parabolic
subalgebra. However, in the real forms of $\g$, there might be complex or
quaternionic components giving rise to two components in the
complexification. These two complex components have to be either conjugate (in
the complex case) or isomorphic (in the quaternionic case).

The latter observation reduces our quest to diagrams with two crosses placed
in a symmetric way. Indeed, more than two crosses cannot result in one
component, while asymmetric positions of the crosses inevitably yield two
complex components which are neither conjugate nor isomorphic. Moreover,
having two components in the complexified $\h$, we may ignore the symmetric
products of the individual parts in $S^2\h$, because there cannot be any
nondegenerate metrics there.

We first dispense with the case that $\g$ is complex but $B$ is not, so that
$B\otimes\C$ is irreducible in $\g\otimes\C\cong\g\oplus\g$ and the diagram
for $(\p,B)$ is invariant under the automorphism exchanging the two components
of the Dynkin diagram. Thus $B\otimes\C=\h_\alpha\otimes\h_\beta$ where
$\h\otimes\C=\h_\alpha\oplus\h_\beta$. Now the ALC is satisfied provided
$\h_\alpha\otimes \h_\alpha^*$ (and hence also $\h_\beta\otimes\h_\beta^*$)
has precisely two irreducible components as a representation of a component of
$\p_0\otimes\C$. Only the (dual) defining representations in type A have this
property, and so $\g$ must have type $A,B$ or $G$, where the nodes crossed
in $\g\otimes\C$ are end nodes corresponding to short simple roots. The
possibilities are listed in Table~\ref{t:hermitian}, covering the following
three cases:

\begin{case}[$A_{\rk}^h$] The c-projective geometries may be equipped with
distinguished hermitian metrics.
\end{case}
\begin{case}[$B_{\rk}^h$] The almost complex version of a free distribution of
rank $k$, may be equipped with distinguished hermitian metrics.
\end{case}
\begin{case}[$G_2^h$] The almost complex version of the
$(2,3,5)$-distributions may be equipped with distinguished hermitian metrics.  
\end{case}

We analyse the remaining real cases with irreducible $\h$ by
the Dynkin type of $\g$ in the following sections.
\end{proof}

\subsection{Proof of Theorem~\ref{main} when $\g$ has type $A_\rk$}

The case $\rk=1$ is trivial, so we assume $\rk\geq 2$, and first consider the
case of a single crossed node. If the crossed node is one of the ends of the
Dynkin diagram, the only real $\g$ is the split form, $\h$ and $S^2\h$ are
irreducible, and $B=S^2\h$ satisfies the ALC: when $\rk=2$,
\[
B\simeq \dyn \noroot{}\link\root{2}\edyn \qquad \h^*\otimes B \simeq
\dyn\noroot{}\link\root{3}\edyn\oplus \dyn\noroot{}\link\root{1}\edyn
\]
and when $\rk\geq 3$,
\begin{equation*}
B\simeq \dyn \noroot{}\link\root{}\dots\root{2}\edyn\qquad
\h^*\otimes B \simeq \dyn\noroot{}\link\root{1}\dots\root{2}\edyn\oplus 
\dyn\noroot{}\link\root{}\dots\root{1}\edyn.
\end{equation*}
These examples can be summarized in the following statement.

\begin{case}[$A^{1,1}_\rk$] Here $\g=\sgl(\rk+1,\R)$, $\rk\geq2$, $\h\cong\R^\rk$
and $B=S^2\h$. This is the most classical case of projective structures on
$\rk$-dimensional manifolds $M$, and nondegenerate sections of $\cB$ are
inverse to arbitrary pseudo-Riemannian metrics on $M$.
\end{case}

Suppose next that the cross is adjacent to one end of the diagram, with
$\rk\geq 3$. We then have $S^2\h= B\oplus B'$, where
\begin{gather*}
\h\simeq\dyn\root{1}\link\noroot{}\link\root{}\dots\root{1}\edyn  \qquad
\h^*\simeq \dyn\root{1}\link\noroot{}\link\root{1}\dots\root{}\edyn\\
B\simeq\dyn\root{}\link\noroot{}\link\root{}\dots\root{1}\link\root{}\edyn
\;(\rk\geq 4)\qquad
B'\simeq\dyn\root{2}\link\noroot{}\link\root{}\dots\root{2}\edyn
\end{gather*}
and $B$ is trivial for $\rk=3$ (when $\h\cong \h^*$). The tensor product
$\h^*\otimes B'$ decomposes into four irreducible components, except for the
real form $\su(2,2)$ when $\rk=3$, in which case there are only three
components. In any case, $B'$ does not satisfy the ALC.

In order for $B$ to have nondegenerate elements, $\rk$ must be odd, and for
$\rk=2p+1\geq 5$, $\h^*\otimes B\simeq
\dyn\root{1}\link\noroot{}\link\root{1}\link\root{}
\dots\root{1}\link\root{}\edyn \oplus
\dyn\root{1}\link\noroot{}\link\root{}\dots\root{1}\edyn$; thus the ALC holds
for $B$.

\begin{case}[$A^{1,2}_\rk$] For each $\rk=2p+1\geq 5$, there are two real forms.
When $\g\simeq \sgl(2p+2,\R)$, the geometries are the almost grassmannian
structures on manifolds $M$ of dimension $4p$, modelled on the grassmannian of
$2$-planes in $\R^{2p}$. The tangent bundle $TM$ is identified with a tensor
product $E\otimes F$, where $\rank E=2$, $\rank F=2p$, and the nondegenerate
metrics in $\cB$ are tensor products of area forms on $E$ and symplectic forms
on $F$. When $\g\simeq \sgl(p,\HQ)$, the geometries are almost quaternionic
geometries, where $TM$ is a quaternionic vector bundle, and the nondegenerate
metrics in $\cB$ are the (real parts of) quaternionic hermitian forms.
\end{case}

When the cross is further from the ends of the diagram, we have $S^2\h=
B\oplus B'$ with
\begin{gather*}
B\simeq\dyn\root{}\link\root{1}\dots\root{}\link\noroot{}\link\root{}
\dots\root{1}\link\root{}\edyn\qquad
B'\simeq\dyn\root{2}\dots\root{}\link\noroot{}\link\root{}\dots\root{2}\edyn.
\end{gather*} 
and there are too many components in both $\h^*\otimes B$ and $\h^*\otimes
B'$ to satisfy the ALC.

We now turn to cases with two crossed nodes, related by the diagram
automorphism of $A_\rk$. First suppose the crossed nodes are the endpoints.
In order to have nontrivial $B$ we must have $\rk\geq3$, in which case
$S^2\h= B\oplus B'\oplus B''$ where
\begin{gather*}
\h\simeq \dyn\noroot{}\link\root{1}\dots\root{}\link\noroot{}\edyn\oplus
\dyn\noroot{}\link\root{}\dots\root{1}\link\noroot{}\edyn \simeq \h^*\\
B\simeq \dyn\noroot{}\link\root{2}\link\noroot{}\edyn \text{ or }
\dyn\noroot{}\link\root{1}\link\root{}
\dots\root{}\link\root{1}\link\noroot{}\edyn\qquad
B'\simeq \dyn\noroot{}\link\root{2}\dots\root{}\link\noroot{}\edyn
\oplus\dyn\noroot{}\link\root{}\dots\root{2}\link\noroot{}\edyn
\end{gather*}
and $B''$ is trivial. Clearly $\h^*\otimes B'$ has too many irreducible
components to satisfy the ALC, no matter which real form we consider.

It remains to consider $B$, first in the case $\rk=3$, where the possible
real forms (with $\h$ irreducible) are $\su(2,2)$ and $\su(1,3)$. Then
\[
\h^*\otimes B\simeq \bigl(\,\dyn\noroot{}\link\root{3}\link\noroot{}\edyn
\oplus \dyn\noroot{}\link\root{3}\link\noroot{}\edyn\,\bigr)
\oplus
\bigl(\,\dyn\noroot{}\link\root{1}\link\noroot{}\edyn
\oplus \dyn\noroot{}\link\root{1}\link\noroot{}\edyn\,\bigr)
\]
and the ALC is satisfied, since these are complexifications of two complex
components for the real form in question. However, for $\rk\geq 4$, we find
that the product $\h^*\otimes B$ leads to complexifications with three
complex components, so the ALC is not satisfied. 
 
\begin{case}[$A^{2,1}_3$] Here $\g$ is
	$\su(2,2)$ or $\su(1,3)$, and $M$ has a CR structure, i.e., a contact distribution
$\cH$ equipped with a complex structure. The Levi form induces the
class of trivial parallel hermitian metrics (the Weyl connections
corresponding to the contact forms leave parallel both the complex structure
and the symplectic form, thus also the associated metric, and the
metrizability problem is trivial as in the conformal case). However, we now
see that there may also be interesting compatible subriemannian metrics on
$\cH\leq TM$ which are hermitian and tracefree with respect to the
Levi form.
\end{case}

Now suppose the crosses are not placed at the ends, say the left one at the
$k$-th position, $2\leq k$. Thus we consider the real forms $\su(p,q)$ with
$k\leq p\leq q$. We have
\begin{gather*}
\h \simeq 
\dyn\root{1}\dots\root{}\link\noroot{}\link\root{}
\dots\root{1}\link\noroot{}\link\root{}\dots\root{}\edyn \oplus
\dyn\root{}\dots\root{}\link\noroot{}\link\root{1}
\dots\root{}\link\noroot{}\link\root{}\dots\root{1}\edyn 
\\
\h^* \simeq 
\dyn\root{}\dots\root{1}\link\noroot{}\link\root{1}
\dots\root{}\link\noroot{}\link\root{}\dots\root{}\edyn \oplus
\dyn\root{}\dots\root{}\link\noroot{}\link\root{}
\dots\root{1}\link\noroot{}\link\root{1}\dots\root{}\edyn 
\end{gather*}
for $\rk>2k$ and
\begin{gather*}
\h \simeq 
\dyn\root{1}\dots\root{}\link\noroot{}\link\noroot{}
\link\root{}\dots\root{}\edyn \oplus
\dyn\root{}\dots\root{}\link\noroot{}\link\noroot{}
\link\root{}\dots\root{1}\edyn 
\\
\h^* \simeq
\dyn\root{}\dots\root{1}\link\noroot{}\link\noroot{}
\link\root{}\dots\root{}\edyn \oplus
\dyn\root{}\dots\root{}\link\noroot{}\link\noroot{}
\link\root{1}\dots\root{}\edyn 
\end{gather*}
for $\rk=2k$. In particular, we have $S^2\h\supset B$ where
\begin{equation*}
B\simeq \dyn\root{1}\dots\root{}\link\noroot{}\link\root{}
\dots\root{}\link\noroot{}\link\root{}\dots\root{1}\edyn,
\end{equation*}
which admits nondegenerate metrics and satisfies the ALC, with
\begin{align*}
\h^*\otimes B&\simeq
\bigl(\,\dyn\root{1}\dots\root{1}\link\noroot{}\link\root{1}
\dots\root{}\link\noroot{}\link\root{}\dots\root{1}\edyn \oplus
\dyn\root{1}\dots\root{}\link\noroot{}\link\root{}
\dots\root{1}\link\noroot{}\link\root{1}\dots\root{1}\edyn\,\bigr)\\
&\;{}\oplus\bigl(\,\dyn\root{}\dots\root{}\link\noroot{}\link\root{1}
\dots\root{}\link\noroot{}\link\root{}\dots\root{1}\edyn\oplus
\dyn\root{1}\dots\root{}\link\noroot{}\link\root{}
\dots\root{1}\link\noroot{}\link\root{}\dots\root{}\edyn\, \bigr)\\
\text{or}\qquad
\h^*\otimes B &\simeq \bigl(\,\dyn\root{1}\dots\root{1}\link\noroot{}\link
\noroot{}\link\root{}\dots\root{1}\edyn \oplus
\dyn\root{1}\dots\root{}\link\noroot{}\link
\noroot{}\link\root{1}\dots\root{1}\edyn\,\bigr)
\\
&\;{}\oplus\bigl(\,\dyn\root{}\dots\root{}\link\noroot{}\link
\noroot{}\link\root{}\dots\root{1}\edyn\oplus
\dyn\root{1}\dots\root{}\link\noroot{}\link
\noroot{}\link\root{}\dots\root{}\edyn\, \bigr).
\end{align*}

\begin{case}[$A^{2,k}_\rk$] Here $\g\simeq \su(p,q)$ with nodes $k$
and $\rk+1-k$ crossed, where $2\leq k\leq p \leq q, p+q=\rk+1$.  In these
geometries, $\cH\cong E\otimes F$, where $E$ is a complex vector bundle of
rank $k$, and the rank $(\rk-2k+1)$ complex vector bundle $F$ comes with a
hermitian form of signature $(p-k,q-k)$. The corank of $\cH\leq TM$ is $k^2$,
and the metrics on $\cH$ are the products of hermitian metrics on $E$ with the
given ones on $F$. When $\rk=2k$ (i.e., $F$ has rank $1$), $\g=\su(k,k+1)$
with the nodes $k,k+1$ are crossed. These are the free CR geometries with
complex structure on $\cH$ studied in \cite{SchmalzS} (where it is also
explained how complex structure arises on $\cH$).
\end{case}

The remaining components of $S^2\h$ do not satisfy the ALC, except in special
cases $k=2$, $2k=\ell$ and $2k+1=\ell$. In particular, when $k=2$,
\begin{equation*}
B'\simeq \dyn\root{}\link\noroot{}\link\root{}\link\root{1}
\dots\root{}\link\noroot{}\link\root{}\edyn 
\oplus
\dyn\root{}\link\noroot{}\link\root{}\dots\root{1}
\link\root{}\link\noroot{}\link\root{}
\edyn
\end{equation*}
satisfies the ALC (and is nontrivial for $\rk\geq 6$).

\begin{case}[$A^{2,h}_\rk$] Here $\g\simeq \su(p,q)$ with nodes $2$
and $\rk-1$ crossed, where $2\leq p\leq q$ and $\rk=p+q-1\geq 6$.  In this
geometry, $\cH\cong E\otimes F$, where $E$ is a complex vector bundle of
rank $2$, and $F$ is a complex vector bundle of rank $\rk-3$. The
corank of $\cH\leq TM$ is $4$. The eligible metrics are the
complex symmetric bilinear forms of the form of tensor product of two exterior
forms.
\end{case}	

When $2k=\rk$, we obtain $S^2\h= B\oplus B'$ where
\begin{gather*}
B'= \begin{matrix}
\dyn\root{2}\dots\root{}\link\noroot{}\link\noroot{}\link\root{}
\dots\root{}\edyn\oplus{}\\
\dyn\root{}\dots\root{}\link\noroot{}\link\noroot{}\link\root{}
\dots\root{2}\edyn\quad\end{matrix}
\end{gather*}
which admits nondegenerate metrics, and satisfies the ALC, with
\begin{gather*}
\h^*\otimes B' \simeq \bigl(\,\dyn\root{2}\dots\root{1}\link\noroot{}\link
\noroot{}\link\root{}\dots\root{}\edyn \oplus
\dyn\root{}\dots\root{}\link\noroot{}\link
\noroot{}\link\root{1}\dots\root{2}\edyn\,\bigr)\oplus{}\\
\qquad\qquad\quad \bigl(\,\dyn\root{2}\dots\root{}\link\noroot{}\link
\noroot{}\link\root{1}\dots\root{}\edyn \oplus
\dyn\root{}\dots\root{1}\link\noroot{}\link
\noroot{}\link\root{}\dots\root{2}\edyn\,\bigr)\oplus{}\\
\qquad\qquad\;\,\bigl(\,\dyn\root{1}\dots\root{}\link\noroot{}\link
\noroot{}\link\root{}\dots\root{}\edyn\oplus
\dyn\root{}\dots\root{}\link\noroot{}\link
\noroot{}\link\root{}\dots\root{1}\edyn\, \bigr).
\end{gather*}
\begin{case}[$A^{2,s}_{2k}$] This case is again the free CR geometry,
with $\g=\su(k,k+1)$, but the eligible metrics are the complex bilinear
metrics on $\cH$.
\end{case}
Similarly, when $\rk=2k+1$ with the $k$-th and $(k+2)$-nd nodes crossed,
\begin{equation*}
B'\simeq \dyn\root{}\link\root{1}\dots\root{}\link\noroot{}\link\root{}
\link\noroot{}\link\root{}\dots\root{}\edyn \oplus
\dyn\root{}\dots\root{}\link\noroot{}\link\root{}
\link\noroot{}\link\root{}\dots\root{1}\link\root{}\edyn 
\end{equation*}
satisfies the ALC.	
\begin{case}[$A^{2,s}_{2k+1}$] Here $\rk=2k+1,$ $\g$ is $\su(k,k+2),$
or $\su(k+1,k+1),$ with nodes $k$ and $k+2$ crossed. In this geometry,
$\cH\cong E\otimes F$, where $E$ is a complex vector bundle of
rank $k$, and $F$ is a complex vector bundle of rank $2$. The
codimension of $\cH\leq TM$ is $k^2$. The eligible metrics are the
complex symmetric bilinear forms of the form of tensor product of two exterior
forms.
\end{case}
We have now exhausted all possibilities, completing the proof in type A.

\subsection{Proof of Theorem~\ref{main} when $\g$ has type $B_\rk$}

In the type $B$ case, there are no complex or quaternionic modules to
consider, so the irreducible cases have one cross only. The unique grading of
length one is odd dimensional conformal geometry. In dimension three we then
have
\[
\h^* \simeq \dyn\noroot{}\llink>\root{2}\edyn \simeq \h\qquad
S^2\h\simeq \dyn\noroot{}\llink>\root{4}\edyn\oplus 
\dyn\noroot{}\llink>\root{}\edyn.
\] 
The trivial representation in $S^2\h$ corresponds to the trivial case of
metrics in the conformal class, which are excluded from our classification,
and choosing $B$ to be the other component leads to three components in
$B\otimes \h^*$, so the ALC fails. Similarly, for conformal geometries of
dimensions $2\rk-1\geq 5$ we obtain
\[
\h^* \simeq \dyn\noroot{}\link\root{1}\dots\root{}\llink>\root{}\edyn 
\simeq \h\qquad
S^2\h\simeq \dyn\noroot{}\link\root{2}\dots\root{}\llink>\root{}\edyn
\oplus  \dyn\noroot{}\link\root{}\dots\root{}\llink>\root{}\edyn.
\] 
As before, the trivial summand is excluded, and the other component fails the
ALC.

We turn now to Lie contact geometries, with the second node crossed.  For
$B_3$,
\[
\h^* \simeq \dyn\root{1}\link\noroot{}\llink>\root{2}\edyn 
\simeq \h\qquad
S^2\h= B\oplus B'\oplus B''\simeq 
\dyn\root{2}\link\noroot{}\llink>\root{}\edyn
\oplus\dyn\root{}\link\noroot{}\llink>\root{2}\edyn
\oplus\dyn\root{2}\link\noroot{}\llink>\root{4}\edyn.
\] 
Here, $B\otimes \h^* = \dyn\root{3}\link\noroot{}\llink>\root{2}\edyn\oplus
\dyn\root{1}\link\noroot{}\llink>\root{2}\edyn$ and satisfies the ALC.  The
other choices lead to too many components. For $B_\rk$ with $\rk\geq 4$, we
have instead
\begin{gather*}
\h^* \simeq
\dyn\root{1}\link\noroot{}\link\root{1}\dots\root{}\llink>\root{}\edyn 
\simeq \h \qquad S^2\h= B\oplus B'\oplus B'' \\ 
B\simeq
\dyn\root{2}\link\noroot{}\link\root{}\dots\root{}\llink>\root{}\edyn\qquad
B'\simeq \dyn\root{2}\link\noroot{}\link\root{2}\dots\root{}\llink>
\root{}\edyn\qquad
B''\simeq  \dyn\root{}\link\noroot{}\link\root{}
\link\root{1}\dots\root{}\llink>\root{}\edyn,
\end{gather*}
except that when $\rk=4$, $B''=\dyn\root{}\link\noroot{}\link\root{}
\llink>\root{2}\edyn$. Now we check that $B'\otimes \h^*$ has six components,
$B''\otimes\h^*$ has three components, but the ALC is again satisfied by $B$.
Lie contact geometries exist for $\g=\so(p,q)$ with $2\leq p\leq q$; $\h$ is the
tensor product of defining representations $\R^2$ of $\sgl(2,\R)$ and
$\R^{p+q-4}$ of $\so(p-2,q-2)$, and $B$ is the tensor product of a symmetric
form on $\R^2$ and the defining inner product of signature $(p-2,q-2)$ on
$\R^{p+q-4}$.  See \cite[\S4.2.5]{CS} for more details on these
geometries.

Next we consider $B_\rk$ with the cross on $k$-th position, $3\leq k\leq
\rk-1$; the outcome is quite similar to the Lie contact case. For $k\neq
\rk-1$, $S^2\h= B \oplus B'\oplus B''$, where
\begin{gather*}
\h^*\simeq\dyn\root{}\dots\root{1}\link\noroot{}\link\root{1}\dots
\root{}\llink>\root{}\edyn \qquad
\h\simeq\dyn\root{1}\dots\root{}\link\noroot{}\link\root{1}\dots
\root{}\llink>\root{}\edyn\\
B\simeq\dyn\root{2}\dots\root{}\link\noroot{}\link\root{}\dots
\root{}\llink>\root{}\edyn\\
B'\simeq\dyn\root{2}\dots\root{}\link\noroot{}\link\root{2}\dots
\root{}\llink>\root{}\edyn\qquad
B''\simeq
\dyn\root{}\link\root{1}\dots\root{}\link\noroot{}\link\root{}\link\root{1}
\dots\root{}\llink>\root{}\edyn\\
\h^*\otimes B\simeq \dyn\root{2}\dots\root{1}\link\noroot{}\link\root{1}\dots
\root{}\llink>\root{}\edyn \oplus
\dyn\root{1}\dots\root{}\link\noroot{}\link\root{1}\dots
\root{}\llink>\root{}\edyn,
\end{gather*}
so $B$ satisfies the ALC, but $B'$ and $B''$ do not. If $k=\rk-1$,
$S^2\h=B\oplus B'\oplus B''$ with
\begin{gather*}
\h^* \simeq \dyn\root{}\dots\root{1}\link\noroot{}\llink>\root{2}\edyn
\qquad \h \simeq \dyn\root{1}\dots\root{}\link\noroot{}\llink>\root{2}\edyn\\
B\simeq \dyn\root{2}\dots\root{}\link\noroot{}\llink>\root{}\edyn\qquad
B'\simeq \dyn\root{2}\dots\root{}\link\noroot{}\llink>\root{4}\edyn\qquad
B''\simeq \dyn\root{}\link\root{1}
\dots\root{}\link\noroot{}\llink>\root{2}\edyn
\end{gather*}
and again, $B$ satisfies the ALC, but $B'$ and $B''$ do not. These
$|2|$-graded geometries are modelled on the flag variety of isotropic
$k$-planes and exist for the real forms $\so(p,q)$ with $k\leq p\leq q$. We have
$\h\cong \R^k\otimes \R^{p+q-k}$ and $B$ corresponds to the tensor product of
a symmetric form on $\R^k$ with the defining inner product on $\R^{p+q-k}$.
\begin{case}[$B^{1,k}_\rk$] Here $\g\simeq \so(p,q)$ with $k\leq p\leq q$
and $p+q=2\rk+1$, and the geometries come equipped with the identification of
the horizontal distribution $\cH\leq TM$ with the tensor product
$E\otimes F$, where $E$ has rank $k$ and $F$ carries a metric of signature
$(p-k,q-k)$. The corank of $\cH\leq TM$ is $\frac12 k(k-1)$.
The metrics in $B$ are the tensor products of symmetric nondegenerate forms on
$E$ and the given metric on $F$.
\end{case}

Finally, we arrive at the cross at the very end. For $B_\rk$ with $\rk\geq
2$, we have
\begin{gather*}
\h^*\simeq\dyn\root{}\dots\root{1}\llink>\noroot{}\edyn\qquad
\h\simeq\dyn\root{1}\dots\root{}\llink>\noroot{}\edyn\qquad
B= S^2\h \simeq \dyn\root{2}\dots\root{}\llink>\noroot{}\edyn\\
\h^*\otimes B \simeq \dyn\root{3}\llink>\noroot{}\edyn\oplus
\dyn\root{1}\llink>\noroot{}\edyn (\rk=2)\qquad
\h^*\otimes B \simeq
\dyn\root{2}\dots\root{1}\llink>\noroot{}\edyn
\oplus 
\dyn\root{1}\dots\root{}\llink>\noroot{}\edyn (\rk\geq 3),
\end{gather*}
and the ALC is satisfied.
\begin{case}[$B^{1,\rk}_\rk$]  Here $\g$ is the split form $\so(\rk,\rk+1)$.
The geometries are the well known free distributions, cf.~\cite{DS}, with rank
$\rk$ horizontal distribution $\cH\leq TM$ of corank
$\frac12\rk(\rk-1)$. The metrics in $B$ are all nondegenerate metrics on
$\cH$.
\end{case}

\subsection{Proof of Theorem~\ref{main} when $\g$ has type $C_\rk$}

As with type $B_\rk$, we only have to consider cases with a single crossed
node. We begin with the first node crossed, corresponding to the well known
contact projective structures, with
\[
\h^*\simeq\dyn \noroot{}\link\root{1}\dots\root{}\llink<\root{}\edyn\simeq\h\;;
\]
we have discussed the lowest dimension three already as the $B_2$ case, which
coincides with the free distribution of rank two. For $\rk\geq 3$, the picture
changes since
\begin{gather*}
S^2\h\simeq \dyn
\noroot{}\link\root{2}\dots\root{}\llink<\root{}\edyn\simeq B\\
B\otimes \h^* \simeq
\dyn\noroot{}\link\root{3}\dots\root{}\llink<\root{}\edyn
\oplus \dyn \noroot{}\link\root{2}\link\root{1}\dots\root{}\llink<\root{}\edyn
\oplus \dyn \noroot{}\link\root{1}\dots\root{}\llink<\root{}\edyn
\end{gather*}
and thus the ALC fails.

Moving on to the second node, we obtain another well known family of examples:
the quaternionic contact geometries (for $\g\cong\symp(p,\rk-p)$, $1\leq p\leq
\rk/2$) or their split analogues (for $\g\cong\symp(2\rk,\R)$)---see
\cite[\S4.3.3]{CS}. For $\rk=3$, we have
\begin{gather*}
\h^*\simeq\dyn \root{1}\link\noroot{}\llink<\root{1}\edyn\simeq\h\qquad 
S^2\h=B' \oplus B''\quad\text{with}\quad
B'\simeq \dyn \root{2}\link\noroot{}\llink<\root{2}\edyn
\end{gather*}
and $B''$ trivial, while for $\rk\geq 4$, we have
\begin{gather*}
\h^*\simeq\dyn \root{1}\link\noroot{}\link\root{1}\dots
\root{}\llink<\root{}\edyn\simeq \h \qquad  S^2\h=B \oplus B'\oplus B''\\
B\simeq \dyn \root{}\link\noroot{}\link\root{}\llink<\root{1}\edyn \quad
\text{or}\quad \dyn\root{0}\link\noroot{}\link\root{}\link\root{1}\dots
\root{}\llink<\root{}\edyn
\qquad B'\simeq\dyn \root{2}\link\noroot{}\link\root{2}
\dots\root{}\llink<\root{}\edyn
\end{gather*}
and $B''$ trivial. Since $\h^*\otimes B'$ decomposes into four components,
there are only nontrivial possibilities for $\rk\geq 4$. For $\rk=4$,
\begin{gather*}
\h^*\otimes B \simeq
\dyn \root{1}\link\noroot{}\link\root{1}\llink<\root{1}\edyn 
\oplus
\dyn \root{1}\link\noroot{}\link\root{1}\llink<\root{}\edyn
\end{gather*}
and so the ALC holds for $B$, but for $\rk\geq 5$, $ \h^*\otimes B$ has three
irreducible components, and the ALC is not satisfied.

\begin{case}[$C^{1,2}_4$] Here the possible real Lie algebras
are $\symp(8,\R)$, $ \symp(2,2)$, or $\symp(1,3)$, with the second node
crossed. In the first case, the geometries come equipped with the
identification of the horizontal distribution $\cH\leq TM$ with the
tensor product $E\otimes F$, where $E$ is rank $2$ and the rank $4$ vector
bundle $F$ comes with a symplectic form.  The eligible metrics in $B$ are the
tensor products of a area form on $E$ and the given symplectic form on $F$. In
the quaternionic cases, $\cH$ is quaternionic and the eligible metrics in $B$
are quaternionic hermitian forms.
\end{case}

Let us next suppose that the $k$-th node is crossed for $3\leq k\leq \rk-2$.
Then
\begin{gather*}
\h^*\simeq\dyn\root{}\dots\root{1}\link\noroot{}\link\root{1}\dots\root{}
\llink<\root{}\edyn
\qquad \h\simeq
\dyn\root{1}\dots\root{}\link\noroot{}\link\root{1}\dots\root{}
\llink<\root{}\edyn\\
S^2\h\simeq B \oplus B'\oplus B''\qquad
B\simeq \dyn\root{}\link\root{1}\dots\root{}\link\noroot{}
\link\root{}\dots\root{}\llink<\root{}\edyn\\
B'\simeq\dyn\root{2}\dots\root{}\link\noroot{}\link\root{2}\dots\root{}
\llink<\root{}\edyn\qquad
B''\simeq \dyn\root{}\link\root{1}\dots\root{}\link\noroot{}
\link\root{}\link\root{1}\dots\root{}\llink<\root{}\edyn\\
\h^*\otimes B \simeq
\dyn\root{}\link\root{1}\dots\root{1}\link\noroot{}\link\root{1}\dots
\root{}\llink<\root{}\edyn \oplus \dyn\root{1}\dots
\root{}\link\noroot{}\link\root{1}\dots\root{} \llink<\root{}\edyn
\end{gather*}
and so $B$ satisfies the ALC, but the other components do not. The relevant
metrics are again tensor products of an exterior form on the rank $k$
auxiliary bundle $E$ and the given symplectic form on $F$ (where the
horizontal distribution is identified with $E\otimes F$).  These geometries
are available for the split form $\symp(2\rk,\R)$ and, if $k$ is even then
also for the real forms $\symp(p,q)$, $k\leq p<q$.

The case with the cross at the last but one node is very similar. Here
\begin{gather*}
\h^*\simeq \dyn
\root{}\dots\root{}\link\root{1}\link\noroot{}\llink<\root{1}\edyn
\qquad\h\simeq \dyn
\root{1}\dots\root{}\link\root{}\link\noroot{}\llink<\root{1}\edyn\qquad
S^2\h =B\oplus B'\\
B\simeq \dyn\root{}\link\root{1}\dots\root{}\link\noroot{}\llink<\root{0}\edyn
\qquad B'\simeq
\dyn\root{2}\dots\root{}\link\root{}\link\noroot{}\llink<\root{2}\edyn\\
\h^*\otimes B\simeq \dyn
\root{}\link\root{2}\link\noroot{}\llink<\root{1}\edyn
\oplus \dyn
\root{1}\link\root{}\link\noroot{}\llink<\root{1}\edyn\quad (\rk=4)\\
\h^*\otimes B\simeq \dyn
\root{}\link\root{1}\dots\root{1}\link\noroot{}\llink<\root{1}\edyn
\oplus \dyn
\root{1}\link\root{}\link\noroot{}\llink<\root{1}\edyn\quad (\rk\geq 5)
\end{gather*}
and so $B$ satisfies the ALC (while $B$ does not).
\begin{case}[$C^{1,k}_\rk$] With the $k$-th node crossed
for $3\leq k\leq\rk-1$, the possible real Lie algebras are $\symp(2n,\R)$, and
if $k$ is even, then also $\symp(p,q)$, $k\leq p \leq q$. In the split case, the
horizontal distribution is a tensor product $\cH\simeq E\otimes F$ with
$E$ of rank $k$ and $F$ symplectic of rank $2\rk-2k$, the eligible metrics are
tensor products of antisymmetric forms on $E$ and the given symplectic form on
$F$.  In the quaternionic cases, $\cH$ comes with a quaternionic structure,
and the eligible metrics are quaternionic hermitian forms.
\end{case}

Finally, we consider the cross at the last node of $C_\rk$ with $\rk\ge 3$
($\rk=2$ is equivalent to the $B_2$ case with the first node crossed). In this
case
\begin{gather*}
\h^* \simeq \dyn \root{}\dots\root{2}\llink<\noroot{}\edyn
\qquad \h \simeq \dyn \root{2}\dots\root{}\llink<\noroot{}\edyn\\
S^2\h= B\oplus B' \qquad
B \simeq \dyn \root{}\link\root{2}\dots\root{}\llink<\noroot{}\edyn\qquad
B' \simeq \dyn \root{4}\dots\root{}\llink<\noroot{}\edyn
\end{gather*}
and both $B$ and $B'$ have too many components in their tensor products with
$\h^*$ to satisfy the ALC.

\subsection{Proof of Theorem~\ref{main} when $\g$ has type $D_\rk$}

We first consider the cases with one cross on $D_\rk$, $\rk\geq 4$, starting
with the the first node, i.e., the even dimensional conformal geometries,
where $S^2\h=B\oplus B'$ with
\[
\h^*\simeq \dyn
\noroot{}\link\root{1}\dots\root{}\rootupright{}\rootdownright{} \edyn \simeq
\h\qquad  B\simeq \dyn
\noroot{}\link\root{2}\dots\root{}\rootupright{}\rootdownright{} \edyn
\qquad B'\simeq \dyn
\noroot{}\link\root{}\dots\root{}\rootupright{}\rootdownright{} \edyn.
\]
As in the odd dimensional case (type $B_\rk$), $B$ does not satisfy the
ALC, and the trivial summand $B'$ yields metrics in the conformal class,
which we exclude.

We turn now to the Lie contact case, with the second node crossed. For
$\rk=4$,
\begin{gather*}
\h^*\simeq \dyn\root{1}\link\noroot{}\rootupright{1}\rootdownright{1}\edyn 
\simeq \h \qquad S^2\h= B\oplus B_1\oplus B_2 \oplus B_3 \\
B\simeq\dyn\root{2}\link\noroot{}\rootupright{2}\rootdownright{2}\edyn\qquad
B_1\simeq\dyn\root{2}\link\noroot{}\rootupright{}\rootdownright{}\edyn\qquad
B_2\simeq\dyn\root{}\link\noroot{}\rootupright{2}\rootdownright{}\edyn\qquad
B_3\simeq\dyn\root{}\link\noroot{}\rootupright{}\rootdownright{2}\edyn \\
B_1\otimes\h^*
\simeq \dyn\root{3}\link\noroot{}\rootupright{1}\rootdownright{1}\edyn 
\oplus \dyn\root{1}\link\noroot{}\rootupright{1}\rootdownright{1}\edyn 
\quad.
\end{gather*}
While $\h^*\otimes B$ has too many components, $B_1$ satisfies the ALC, as do
$B_2$ and $B_3$ by symmetry. The metrics are tensor products of two area forms
and a symmetric form on $\h\cong \R^2\otimes \R^2\otimes \R^2$. The geometries
exist for the real forms $\so(4,4)$, $\so(3,5)$ and the quaternionic
$\so^*(8)\simeq\so(2,6)$. Similarly, for $\rk\geq 5$, we have
\begin{gather*}
\h^* \simeq \dyn\root{1}\link\noroot{}\link\root{1}\dots
\root{}\rootupright{}\rootdownright{}\edyn \simeq \h\qquad
S^2\h = B\oplus B' \oplus B''\\
B\simeq\dyn\root{2}\link\noroot{}\link\root{}\dots
\root{}\rootupright{}\rootdownright{}\edyn \qquad
\h^*\otimes B \simeq \dyn\root{3}\link\noroot{}\link\root{1}\dots
\root{}\rootupright{}\rootdownright{}\edyn \oplus
\dyn\root{1}\link\noroot{}\link\root{1}\dots\root{}\rootupright{}\rootdownright{}\edyn \\
B'\simeq \dyn\root{2}\link\noroot{}\link\root{2}\dots
\root{}\rootupright{}\rootdownright{}\edyn \qquad
B''\simeq
\dyn\root{}\link\noroot{}\link\root{}\rootupright{1}\rootdownright{1}\edyn
\quad\text{or}\quad \dyn\root{}\link\noroot{}\link\root{}\link\root{1}\dots
\root{}\rootupright{}\rootdownright{}\edyn
\end{gather*}
where $B$ satisfies the ALC, but $B'$ and $B''$ do not. In addition to the
real forms $\so(p,q)$, $2\leq p\leq q$, $p+q=2\rk$, which are analogous to the
Lie contact geometries of type $B_\rk$, the real form $\so^*(2\rk)$ is also
possible.

As with type $B_\rk$, the cases where the $k$-th node is crossed, with $3\leq
k\leq\rk-2$ behave in a similar way. If $k\leq\rk-3$ then
\begin{gather*}
\h^* \simeq 
\dyn\root{}\dots\root{1}\link\noroot{}\link\root{1}
\dots\root{}\rootupright{}\rootdownright{}\edyn 
\qquad \h\simeq \dyn\root{1}\dots\root{}\link\noroot{}\link\root{1}
\dots\root{}\rootupright{}\rootdownright{}\edyn\qquad 
S^2\h=  B\oplus B' \oplus B''\\
B\simeq\dyn\root{2}\dots\root{}\link\noroot{}\link
\root{}\dots\root{}\rootupright{}\rootdownright{}\edyn\qquad
B'\simeq\dyn\root{2}\dots\root{}\link\noroot{}\link\root{2}
\dots\root{}\rootupright{}\rootdownright{}\edyn \\
B''\simeq\dyn\root{}\link\root{1}\dots\root{}\link\noroot{}
\link\root{}\rootupright{1}\rootdownright{1}\edyn
\quad\text{or}\quad
\dyn\root{}\link\root{1}\dots\root{}\link\noroot{}
\link\root{}\link\root{1}\dots\root{}\rootupright{}\rootdownright{}\edyn\\
\h^* \otimes B\simeq 
\dyn\root{2}\dots\root{1}\link\noroot{}\link
\root{1}\dots\root{}\rootupright{}\rootdownright{}\edyn 
\oplus \dyn\root{1}\dots\root{}\link\noroot{}
\link\root{1}\dots\root{}\rootupright{}\rootdownright{}\edyn 
\end{gather*}
so that $B$ satisfies the ALC, while $B'$ and $B''$ do not. The geometries
exist for real forms $\so(p,q)$, $k\leq p\leq q$, $p+q=2\rk$, and if $k$ is
even, then also for $\so^*(2\rk)$. If $k=\rk-2$, the computation differs
slightly, but the outcome is similar:
\begin{gather*}
\h^* \simeq \dyn\root{}\dots\root{1}\link\noroot{}
\rootupright{1}\rootdownright{1}\edyn \qquad\h\simeq 
\dyn\root{1}\dots\root{}\link\noroot{}
\rootupright{1}\rootdownright{1}\edyn \qquad
S^2\h =  B\oplus B_1 \oplus B_2\oplus B_3\\
B\simeq\dyn\root{2}\dots
\root{}\link\noroot{}\rootupright{}\rootdownright{}\edyn\qquad
\h^* \otimes B \simeq
\dyn\root{2}\dots\root{1}\link\noroot{}\rootupright{1}\rootdownright{1}\edyn 
\oplus
\dyn\root{1}\dots\root{}\link\noroot{}\rootupright{1}\rootdownright{1}\edyn\\
B_1\simeq\dyn\root{2}\dots
\root{}\link\noroot{}\rootupright{2}\rootdownright{2}\edyn\qquad
B_2\simeq\dyn\root{}\link\root{1}\dots
\root{}\link\noroot{}\rootupright{2}\rootdownright{0}\edyn \qquad
B_3\simeq\dyn\root{0}\link\root{1}\dots\root{}\link\noroot{}\rootupright{0}\rootdownright{2}\edyn
\end{gather*}
where $B$ satisfies the ALC, but the other cases do not. The geometries exist
for the real forms $\so(\rk,\rk)$, $\so(\rk-1,\rk+1)$, and if $\rk$ is even
then also $\so^*(2\rk)$.
\begin{case}[$D^{1,k}_\rk$] Here $\g\simeq \so(p,q)$ with $2\leq k\leq p\leq q$
and $p+q = 2n$, the geometries come equipped with the identification of the
horizontal distribution $\cH\leq TM$ with the tensor product
$E\otimes F$, where $E$ has rank $k$ and $F$ carries a metric of signature
$(p-k,q-k)$. The corank of $\cH\leq TM$ is $\frac12 k(k-1)$.  The
metrics in $B$ are the tensor products of symmetric nondegenerate forms on $E$
and the given metric on $F$. When $\g\simeq \so^*(2n)$ and $k$ is even, the
geometries come with the identification of the horizontal distribution
$\cH$ with the tensor product of a quaternionic rank $k$ bundle $E$ and
a quaternionic rank $n-2k$ bundle $F$ equipped with a quaternionic
skew-hermitian form. The metrics in $B$ are quaternionic hermitian forms.
\end{case}

The remaining case with one cross is the so called spinorial geometry with the
cross on one of the nodes in the fork. The case of $D_4$ coincides with the
$6$-dimensional conformal Riemannian geometry. For $\rk\geq 5$, we have
$S^2\h=B\oplus B'$ with
\begin{gather*}
\h^* \simeq \dyn \root{}\dots\root{1}\norootupright{}\rootdownright{}\edyn
\qquad \h\simeq
\dyn \root{}\link\root{1}\dots\root{}\norootupright{}\rootdownright{}\edyn
\qquad
B\simeq\dyn \root{}\link\root{2}\dots
\root{}\norootupright{}\rootdownright{}\edyn\\
B'\simeq
\dyn\root{}\link\root{}\link\root{}\norootupright{}\rootdownright{1}\edyn
\quad\text{or}\quad
\dyn \root{}\link\root{}\link\root{}\link\root{1}\dots
\root{}\norootupright{}\rootdownright{}\edyn.
\end{gather*}
Now $\h^*\otimes B$ has three summands, as does $\h^*\otimes B'$, except for
$\rk=5$, when
\begin{gather*}
\h^*\otimes B'\simeq
\dyn\root{}\link\root{}\link\root{1}\norootupright{}\rootdownright{1}\edyn
\oplus
\dyn\root{}\link\root{1}\link\root{}\norootupright{}\rootdownright{1}\edyn\;.
\end{gather*}
Here, in the complex setting, $\h\cong \Wedge^2\C^5$, and $B\cong\C^{5*}\cong
\Wedge^4\C^5\leq S^2\Wedge^2\C^5$, where $\alpha\in B\cong\C^{5*}$
determines a metric $g_\alpha$ on $\h^*\cong \Wedge^2\C^{5*}$ by
$g_\alpha(\xi,\eta)=\alpha\wedge\xi\wedge\eta\in \Wedge^5\C^{5*}\cong \C$.
Such a metric is never nondegenerate, so this case is excluded.

We next consider $D_\rk$ cases with two crossed nodes. For $\h$ to be
irreducible, the semisimple part of the Levi factor $\p/\p^\perp$ must be
simple.  Indeed, working in the complex setting, a direct check reveals that
breaking the Dynkin diagram by two crosses into more than one part always
leads to non-isomorphic representations for the two components of $\h$.
Furthermore, the only way to obtain isomorphic components is to take the two
spinorial nodes of the $D_\rk$ diagram. The only real forms compatible with
this geometry are the split form $\so(\rk,\rk)$, the quasi-split form
$\so(\rk+1,\rk-1)$ and the quaternionic form $\so^*(2\rk)$ with $\rk=2p+1$
odd.  In the split case, $\h$ is not irreducible, so this does not fit into
our classification. In the quasi-split case, $\h\cong \R^{\rk-1}\otimes_\R\C$
is complex, while in the quaternionic case, $\h$ is quaternionic. In $S^2\h =
B\oplus B'\oplus B''$, with
\begin{gather*}
\h^* = \dyn \root{}\dots\root{1}\norootupright{-2}\norootdownright{}\edyn
\oplus \dyn \root{}\dots\root{1}\norootupright{}\norootdownright{-2}\edyn
\\
\h= \dyn \root{1}\dots\root{}\norootupright{1}\norootdownright{-1}\edyn
\oplus \dyn \root{1}\dots\root{}\norootupright{-1}\norootdownright{1}\edyn
\\
B\cong\dyn \root{2}\dots\root{}\norootupright{}\norootdownright{}\edyn\qquad
B'\cong\dyn \root{}\link\root{1}\dots
\root{}\norootupright{}\norootdownright{}\edyn\qquad
B''\cong\dyn\root{2}\dots\root{}\norootupright{2}\norootdownright{-2}\edyn
\oplus\dyn\root{2}\dots\root{}\norootupright{-2}\norootdownright{2}\edyn
\end{gather*}
where we denote the nonzero weights over the crossed nodes for clarity.
Observe that
\begin{gather*}
\h^*\otimes B\simeq\Bigl(\,
\dyn \root{2}\dots\root{1}\norootupright{}\norootdownright{}\edyn\oplus
\dyn\root{2}\dots\root{1}\norootupright{}\norootdownright{}\edyn\;\Bigr)
\oplus \Bigl(\,
\dyn \root{1}\dots\root{}\norootupright{}\norootdownright{}\edyn \oplus
\dyn\root{1}\dots\root{}\norootupright{}\norootdownright{}\edyn\;\Bigr)\\
\h^*\otimes B'\simeq\Bigl(\,
\dyn \root{}\link\root{1}\dots\root{1}\norootupright{}\norootdownright{}
\edyn \oplus \dyn \root{}\link\root{1}\dots
\root{1}\norootupright{}\norootdownright{}\edyn\;\Bigr) \oplus \Bigl(\,
\dyn \root{1}\dots\root{}\norootupright{}\norootdownright{}\edyn
\oplus \dyn
\root{1}\dots\root{}\norootupright{}\norootdownright{}\edyn\; \Bigr)
\end{gather*}
so that both $B$ and $B'$ satisfy the ALC, but $B''$ does not ($\h^*\otimes
B''$ has eight components).
\begin{case}[$D^{2,s}_\rk$] When $\g=\so(\rk-1,\rk+1)$, the horizontal
distribution $\cH \leq TM$ is a complex vector bundle of complex rank
$\rk-1$, and the metrics in $B$ are complex bilinear. When $\g=\so^*(2\rk)$,
with $\rk=2p+1$ odd, the horizontal distribution $\cH\leq TM$ has a
quaternionic structure of quaternionic rank $p$ and the metrics in $B$
are quaternionic skew-hermitian.
\end{case}
\begin{case}[$D^{2,h}_\rk$] This case involves the same geometries as in the
previous case, with $\rk=2p+1$ odd, except that when $\g=\so(\rk-1,\rk+1)$,
the metrics in $B'$ are hermitian, while for $\g=\so^*(2\rk)$, the metrics in
$B$ are quaternionic hermitian.
\end{case}

\subsection{Proof of Theorem~\ref{main} when $\g$ has exceptional type}

The first case we consider is the Lie algebra $E_6$. Let us consider
possibilities for parabolic subalgebras with one crossed node.  The first
possibility is
\begin{gather*}
\h^* \simeq \dyn \noroot{}\link\root{1}\link\root{}\rootdown{}
\link\root{}\link\root{}\edyn
\qquad \h \simeq \dyn \noroot{}\link\root{}\link\root{}\rootdown{1}
\link\root{}\link\root{}\edyn\\
S^2\h =B\oplus B' \qquad
B\simeq \dyn \noroot{}\link\root{}\link\root{}\rootdown{}
\link\root{}\link\root{1}\edyn\qquad
B'\simeq \dyn \noroot{}\link\root{}\link\root{}\rootdown{2}
\link\root{}\link\root{}\edyn
\end{gather*}
The product $\h^*\otimes B$ decomposes (as the product of a spinor and
defining representation of SO$(10)$) into the sum of two $P$-modules, hence
the ALC is satisfied.
\begin{case}[$E^{1,1}_6$] This is the $|1|$-graded geometry for $E_6$ for which
the allowed real forms are the split form $E_{6(6)}$, or $E_{6(-26)}$, and
$\cH= TM$ carries the structure of basic spinor representation $S^+$ of
$\so(5,5)$, or $\so(1,9)$ respectively. The $P$-module $B$ corresponding
to the eligible metrics is the defining representation of $\so(5,5)$ or
$\so(1,9)$.
\end{case}
Consider next the adjoint variety, with the node on the short leg crossed.  We
have
\begin{gather*}
\h^* \simeq  \h \simeq \dyn \root{}\link\root{}\link\root{1}\norootdown{}
\link\root{}\link\root{}\edyn\\
S^2\h=B\oplus B'\qquad
B \simeq \dyn \root{}\link\root{}\link\root{2}\norootdown{}
\link\root{}\link\root{}\edyn\qquad
B' \simeq \dyn \root{1}\link\root{}\link\root{}\norootdown{}
\link\root{}\link\root{1}\edyn
\end{gather*}
and find that both $\h^*\otimes B$ and $h^*\otimes B'$ have four components.

In the remaining two cases with one crossed node, the semisimple part of
$\p/\p^\perp$ is not simple, and it is easy to see that the ALC cannot be
satisfied:
\begin{gather*}
\h^* \simeq \dyn \root{1}\link\noroot{}\link\root{1}\rootdown{}
\link\root{}\link\root{}\edyn
\qquad \h \simeq \dyn \root{1}\link\noroot{}\link\root{}\rootdown{}
\link\root{1}\link\root{}\edyn\\
S^2\h \simeq \dyn \root{2}\link\noroot{}\link\root{}\rootdown{}
\link\root{2}\link\root{}\edyn
\oplus \dyn \root{2}\link\noroot{}\link\root{}\rootdown{1}
\link\root{}\link\root{}\edyn 
\oplus \dyn \root{}\link\noroot{}\link\root{1}\rootdown{}
\link\root{}\link\root{1}\edyn\\
\h^* \simeq \dyn \root{}\link\root{1}\link\noroot{}\rootdown{1}
\link\root{1}\link\root{}\edyn
\qquad \h \simeq \dyn \root{1}\link\root{}\link\noroot{}\rootdown{1}
\link\root{}\link\root{1}\edyn\\
S^2\h\simeq \dyn \root{2}\link\root{}\link\noroot{}\rootdown{2}
\link\root{}\link\root{2}\edyn
\oplus \dyn \root{2}\link\root{}\link\noroot{}\rootdown{}
\link\root{1}\link\root{}\edyn\oplus \dyn \root{}\link\root{1}\link\noroot{}\rootdown{}
\link\root{}\link\root{2}\edyn
\oplus \dyn \root{}\link\root{1}\link\noroot{}\rootdown{2}
\link\root{1}\link\root{}\edyn.
\end{gather*}
The only case with two crosses for which $\h$ could be irreducible is
\begin{gather*}
\h^* \simeq  \h \simeq \dyn \noroot{}\link\root{1}\link\root{}\rootdown{}
\link\root{}\link\noroot{}\edyn
\oplus
\dyn \noroot{}\link\root{}\link\root{}\rootdown{}
\link\root{1}\link\noroot{}\edyn,
\end{gather*}
and indeed, $\h$ is irreducible for the quasi-split real form $E_{6(2)}$.  For
this real form, the nontrivial irreducible summands in $S^2\h$ are
\begin{gather*}
B\simeq  \dyn \noroot{}\link\root{}\link\root{}\rootdown{1}
\link\root{}\link\noroot{}\edyn\quad
B'\simeq \dyn \noroot{}\link\root{1}\link\root{}\rootdown{}
\link\root{1}\link\noroot{}\edyn\quad
B''\simeq \dyn \noroot{}\link\root{2}\link\root{}\rootdown{}
\link\root{}\link\noroot{}\edyn\oplus
\dyn \noroot{}\link\root{}\link\root{}\rootdown{}
\link\root{2}\link\noroot{}\edyn.
\end{gather*}
The products $\h^*\otimes B'$ and $\h^*\otimes B''$ have too many components
but
\begin{gather*}
\h^*\otimes B\simeq \dyn \noroot{}\link\root{1}\link\root{}\rootdown{}
\link\root{}\link\noroot{}\edyn \oplus
\dyn \noroot{}\link\root{}\link\root{}\rootdown{}
\link\root{1}\link\noroot{}\edyn\oplus
\dyn \noroot{}\link\root{1}\link\root{}\rootdown{1}
\link\root{}\link\noroot{}\edyn \oplus
\dyn \noroot{}\link\root{}\link\root{}\rootdown{1}
\link\root{1}\link\noroot{}\edyn
\end{gather*}
and so the ALC is satisfied.
\begin{case}[$E^{2,h}_6$] This is a $|2|$-graded geometry for the quasi-split
Lie algebra $E_{6(2)}$.  The horizontal distribution $\cH$ carries the
structure of the spinor representation $S$ of $\so(3,5)$, while the eligible
metrics are induced by the defining representation of $\so(3,5)$.
\end{case}

For $E_7$ and its real forms, irreducibility of $\h$ implies that only one
node may be crossed, and a similar analysis to the $E_6$ type shows that the
cases with the best chance to satisfy the ALC are those with cross over the
first or last node, where
\begin{gather*}
\h^* \simeq \dyn \noroot{}\link\root{1}\link\root{}\link\root{}\rootdown{}
\link\root{}\link\root{}\edyn
\qquad \h \simeq \dyn \noroot{}\link\root{}\link\root{}\link\root{}\rootdown{}
\link\root{}\link\root{1}\edyn\\
S^2\h \simeq \dyn \noroot{}\link\root{1}\link\root{}\link\root{}\rootdown{}
\link\root{}\link\root{}\edyn 
\oplus \dyn \noroot{}\link\root{}\link\root{}\link\root{}\rootdown{}
\link\root{}\link\root{2}\edyn\\
\text{or}\qquad
\h^* \simeq \h\simeq\dyn \root{}\link\root{}\link\root{}\link\root{}\rootdown{}
\link\root{1}\link\noroot{}\edyn\\
S^2\h \simeq \dyn \root{}\link\root{}\link\root{}\link\root{}\rootdown{}
\link\root{2}\link\noroot{}\edyn
\oplus 
\dyn \root{}\link\root{1}\link\root{}\link\root{}\rootdown{}
\link\root{}\link\noroot{}\edyn.
\end{gather*}
It is easy to see that none of these cases satisfy the ALC.

Similarly, for $E_8$, even the most promising candidates
\begin{gather*}
\h^* \simeq \h\simeq\dyn \noroot{}\link\root{1}\link\root{}\link\root{}\link
\root{}\rootdown{}\link\root{}\link\root{}\edyn\\
S^2\h \simeq \dyn \noroot{}\link\root{2}\link\root{}\link\root{}\link\root{}
\rootdown{}\link\root{}\link\root{}\edyn \oplus
\dyn \noroot{}\link\root{}\link\root{}\link\root{}\link\root{}\rootdown{}
\link\root{}\link\root{1}\edyn\\
\text{and}\qquad
\h^*\simeq\dyn\root{}\link\root{}\link\root{}\link\root{}\link\root{}
\rootdown{}\link\root{1}\link\noroot{}\edyn\qquad
\h\simeq\dyn\root{}\link \root{}\link\root{}\link\root{}\link\root{}
\rootdown{1}\link\root{}\link\noroot{}\edyn\\
S^2\h \simeq \dyn \root{}\link\root{}\link\root{}\link\root{}\link\root{}
\rootdown{2}\link\root{}\link\noroot{}\edyn
\oplus\dyn\root{}\link\root{}\link\root{1}\link\root{}\link\root{}\rootdown{}
\link\root{}\link\noroot{}\edyn
\end{gather*}
fail the ALC. Again there can be no cases with more than one cross.

\smallbreak
For $F_4$, the only non-split possibility is
\begin{gather*}
\h^*\simeq \h\simeq
\dyn \noroot{}\link\root{1}\llink<\root{}\link\root{}\edyn
\qquad S^2\h = B\oplus B' \quad\text{where}\quad
B\simeq  \dyn \noroot{}\link\root{2}\llink<\root{}\link\root{}\edyn
\end{gather*}
and $B'$ is trivial. However, $B$ does not satisfy the ALC.

For the split form, all cases can have only one crossed node. When
\begin{gather*}
\h^* \simeq
\dyn \root{1}\link\noroot{}\llink<\root{1}\link\root{}\edyn\qquad
\h\simeq\dyn \root{1}\link\noroot{}\llink<\root{}\link\root{1}\edyn\\
S^2\h= B\oplus B'\qquad
B \simeq \dyn \root{}\link\noroot{}\llink<\root{1}\link\root{}\edyn\qquad
B'\simeq \dyn \root{2}\link\noroot{}\llink<\root{}\link\root{2}\edyn,
\end{gather*}
the elements of $B$ are all degenerate, whereas $\h^*\otimes B'$ does not
satisfy the ALC.

In the remaining two possibilities for the crossed node,
\begin{gather*}
\h^* \simeq  
\dyn \root{}\link\root{2}\llink<\noroot{}\link\root{1}\edyn\qquad
\h\simeq\dyn \root{2}\link\root{}\llink<\noroot{}\link\root{1}\edyn\\
 S^2\h \simeq  
\dyn \root{4}\link\root{}\llink<\noroot{}\link\root{2}\edyn\oplus
\dyn \root{}\link\root{2}\llink<\noroot{}\link\root{2}\edyn \oplus
\dyn \root{2}\link\root{1}\llink<\noroot{}\link\root{}\edyn\\
\text{and}\qquad
\h^* \simeq  \h\simeq
\dyn \root{}\link\root{}\llink<\root{1}\link\noroot{}\edyn\qquad
 S^2\h \simeq  
\dyn \root{}\link\root{}\llink<\root{2}\link\noroot{}\edyn\oplus
\dyn \root{2}\link\root{}\llink<\root{}\link\noroot{}\edyn,
\end{gather*}
the ALC fails in all cases.

\smallbreak
Finally, for $G_2$, only the split case is possible, with one crossed node.
\begin{gather*}
\h^* \simeq  \h\simeq
\dyn \root{3}\lllink<\noroot{}\edyn\qquad
S^2\h \simeq  
\dyn \root{6}\lllink<\noroot{}\edyn\oplus
\dyn \root{2}\lllink<\noroot{}\edyn\\
\text{and}\qquad
\h^* \simeq  \h\simeq
\dyn \noroot{}\lllink<\root{1}\edyn\qquad
S^2\h= B \simeq  
\dyn \noroot{}\lllink<\root{2}\edyn 
\end{gather*}
and only the last of these satisfies the ALC, with
\[
\h^*\otimes B\simeq 
\dyn \noroot{}\lllink<\root{3}\edyn \oplus
\dyn \noroot{}\lllink<\root{1}\edyn.
\]

\begin{case}[$G^{1,1}_2$] The real Lie algebra is the split form of $G_2$ and
the geometry is given by Cartan's famous $(2,3,5)$ distribution. Hence the
horizontal distribution has rank $2$ and the $P$-module $B$ corresponding to
the eligible metrics is the second symmetric power of the defining
representations of $\sgl(2,\R)$.
\end{case}

\section{Examples of reducible cases}\label{s:reducible}

We now discuss a few cases of geometries with reducible $\cH$, where
the linearized metrizability procedure works.  Actually, we have seen several
such examples already, when dealing with real forms with irreducible, but not
absolutely irreducible $\h$ in Theorem~\ref{main}. We list some of
those with irreducible $B$ in the following result.

\begin{thm}\label{more}
The following real parabolic geometries with the Lie algebra $\g$ and choice
of $B$ satisfy the ALC and the linearized metrizability procedure works.
\begin{numlist}  
\item
$B\simeq \dyn \noroot{}\link\root{2}\link\noroot{}\edyn$,
$\g\simeq \sgl(4,\R)$. 
These are Lagrangian contact structures in dimension $5$, where a 
decomposition $\cH= E \oplus F$ of the contact subbundle 
into a direct sum of two Lagrangian subbundles is given. The metrics in $B$
are the split signature metrics with both $E$ and $F$ isotropic.
\item $B\simeq\dyn
\root{1}\dots\root{}\link\noroot{}\link\root{}\dots\root{}\link\noroot{}\link
\root{}\dots\root{1}\edyn$, $\g\simeq \sgl(n+1,\R)$, $n$ even, the first cross
at the $k$-th root $(2k<n)$, crosses at symmetric places. These geometries
come with $\cH$ identified with the sum of two vector bundles of the
form $(E\otimes F^*)\oplus( F\otimes G^*)$, where $E$ and $G$ are real vector
bundles of rank $k$, and $F$ is a real vector bundle of rank $n-2k+1$. The
metrics are the split signature ones, in the subbundle $E\otimes G^*\leq
E\otimes F^*\otimes F\otimes G^*$.
\item $B\simeq\dyn
\root{2}\root{}\link\root{}\dots\root{}\norootupright{}\norootdownright{}\edyn$,
the real Lie algebra is $\so(p,p)$, $2p = n$. The horizontal distribution
$\cH\leq TM$ is the sum of two rank $p-1$ bundles $E$ and $F$ coming
from the defining representations of $\sgl(p-1,\R)$ with different weights,
and $B$ stays for general split metrics on $E\oplus F$.
\item $B\simeq\dyn \noroot{}\link\root{}\link\root{}\rootdown{1}
\link\root{}\link\noroot{}\edyn$, the real Lie algebra is the split form of
type E. The geometry is $|2|$-graded, and the horizontal subspace
$\cH\leq TM$  corresponds to the direct sum of two of the three
isomorphic defining representations of $\so(4,4)$. The eligible metrics are
the generic tracefree split ones and the $P$-module $B$ corresponds
to the third defining representation $\R^{8}$, up to the weight.
\end{numlist}
\end{thm}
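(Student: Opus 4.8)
The plan is to obtain Theorem~\ref{more} as a direct application of Theorem~\ref{alt} with $r=1$: in each of the four cases one sets $\h_1=\h$ and $B_1=B$, so that the only module-theoretic input required is that the single irreducible $B\leq S^2\h$ satisfies the strong ALC and admits nondegenerate elements. What makes this short is that each case is a split real form of a parabolic geometry whose complexified data $(\p\otimes\C,\,B\otimes\C)$ already occurs in the proof of Theorem~\ref{main}: (i) is the split-form analogue of $A_3^{2,1}$, (ii) of $A_\rk^{2,k}$, (iii) of $D_\rk^{2,s}$ (the split form $\so(\rk,\rk)$, which was deliberately left out there because $\h$ fails to be irreducible), and (iv) of $E_6^{2,h}$. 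Since the ALC is a condition on the complexification alone --- concretely, on the irreducible decomposition of $\h^*\otimes B$ over $\C$ --- the computations recorded in the corresponding parts of the proof of Theorem~\ref{main} apply verbatim and already certify the ALC for $B$ in each case.

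It then remains to check three points, each essentially by inspection of the marked Dynkin diagram. First, because $\g$ is taken in a split form, the socle $\h$ is genuinely reducible over $\R$: the two crossed simple roots $\alpha,\beta\in\Sig_0$ give a decomposition $\h=\h_\alpha\dsum\h_\beta$ into real $\p_0$-submodules with $\h_\alpha\tens\C$ and $\h_\beta\tens\C$ irreducible, and since the $\t_0$-weight of $B$ is $-\alpha^0-\beta^0$, the module $B$ sits inside the off-diagonal summand $\h_\alpha\vtens\h_\beta$ of $S^2\h$; this yields the concrete identifications of $B$ stated in~(i)--(iv) --- the pairing between the two Lagrangian subbundles in~(i), the factor $E\tens G^*$ inside $E\tens F^*\tens F\tens G^*$ in~(ii), the split pairing of the two rank-$(p-1)$ bundles in~(iii), and the third eight-dimensional module of $\so(4,4)$ via triality in~(iv). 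Second, a generic element of $B$ is an invertible pairing between $\h_\alpha$ and $\h_\beta$, hence a nondegenerate bilinear form of the advertised split signature for which each summand is isotropic, so nondegenerate elements exist. Third, the strong ALC holds: in each case the two crossed nodes occupy symmetric positions --- the extreme nodes of $A_\rk$ in~(i) and~(ii), the two fork nodes of $D_\rk$ in~(iii), the ends of the two long arms of $E_6$ in~(iv) --- so the order-two automorphism of the Dynkin diagram of $\g$ preserves $\Sig_0$ and interchanges $\alpha$ and $\beta$, which is precisely the hypothesis required by the strong ALC.

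Granting these, Theorem~\ref{alt} applied with $r=1$ furnishes the induced line bundle $\cL$, the invariant first order operator $\CD$ on sections of $\cB\tens\cL$, and the bijection between nondegenerate solutions of $\CD\eta=0$ and nondegenerate sections $\psi$ of $\cB^*$ with $\nabla\restr{\cH}\psi=0$ for some Weyl connection --- that is, the linearized metrizability procedure works. The remaining geometric descriptions in~(i)--(iv) are read off from the induced $\p_0$-structure exactly as in the corresponding cases of Theorem~\ref{main}. I expect the only place requiring genuine care to be the bookkeeping in the second paragraph: matching $B$ to the correct off-diagonal module in each real form, and checking that the associated pairings are generically invertible so that the stated split signature is actually attained; the ALC component counts and the existence of the diagram automorphism are either already done in the proof of Theorem~\ref{main} or immediate from the shape of the diagram.
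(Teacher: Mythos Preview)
Your approach is correct and essentially identical to the paper's: the authors simply note that all cases were already treated for different real forms in the proof of Theorem~\ref{main} (since the ALC depends only on the complexification), and that the strong ALC holds because the crossed nodes are symmetrically placed, so Theorem~\ref{alt} applies. One small slip: in case~(ii) the crossed nodes are at positions $k$ and $n+1-k$, which are symmetric under the $A_n$ diagram automorphism but not the extreme nodes in general; your argument still goes through since symmetry, not extremality, is what the strong ALC requires.
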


\begin{proof}
All cases were already treated for different real forms in the previous
section, except for the very last case. The computation presented there showed
that the ALC is satisfied but the subbundle $\cH$ is not irreducible, but a
sum of two subbundles. At the same time, the strong ALC holds, thus the
linearized metrizability procedure works as required.
\end{proof}

Our final result illustrates the possibility of finding examples with
reducible $B$, including one in which a trivial one-dimensional
component occurs.

\begin{thm}
The following real parabolic geometries with the Lie algebra $\g$ and choice
of $B$ satisfy
the ALC.
\begin{numlist}  
\item $B\simeq\dyn\root{0}\link\noroot{1}\llink<\root{0}\link\noroot{-1}\edyn
  \oplus\dyn\root{0}\link\noroot{-5}\llink<\root{2}\link\noroot{2}\edyn$, the
  real Lie algebra is the split form of type F \textup($|6|$-graded\textup).
  The horizontal distribution $\cH\leq TM$ is built of two rank $2$ bundles
  $E$ and $F$ coming from the defining $\sgl(2,\R)$ representations of the
  different components in $\p_0$.  The first component $\cH_1$ is a tensor
  product $E\otimes F$ with appropriate weight, while $F$ stays for the other
  component $\cH_2$ with another weight. The eligible metrics are the sums of
  the metrics in $\Lambda^2 E\otimes \Lambda^2 F\leq S^2\cH_1$, and the
  metrics in $S^2\cH_2$.
\item 
$B\simeq\dyn\root{}\link\noroot{}\link\root{}\link\root{1}
	\dots\root{}\link\noroot{}\link\root{}\edyn 
	\oplus
	\dyn\root{}\link\noroot{}\link\root{}\dots\root{1}
	\link\root{}\link\noroot{}\link\root{}
	\edyn$.
In this case $\g\simeq \sgl(\rk+1), 5\leq \rk,$ with nodes $2$ and
$\rk-1$ crossed, and $\cH\cong E\otimes F^*\oplus F\otimes G$, where
$E$ is a real vector bundle of rank $2$, and $F$ is a real vector bundle of
rank $\rk-3$. The corank of $\cH\leq TM$ is $4$. The eligible
metrics are sums of the symmetric bilinear forms on $\cH_1$ and
$\cH_2$, both of the form of tensor product of two exterior forms.
\item
$B\simeq \dyn\root{}\link\root{1}\dots\root{}\link\noroot{}\link\root{}
\link\noroot{}\link\root{}\dots\root{}\edyn \oplus
\dyn\root{}\dots\root{}\link\noroot{}\link\root{}
\link\noroot{}\link\root{}\dots\root{1}\link\root{}\edyn$.
Similarly to the previous case, $\g\simeq \sgl(2k),~4\leq k,$ with
nodes crossed at symmetric positions, and $\cH\cong E\otimes F^*\oplus
F\otimes G$, where $E$ and $G$ are real vector bundles of rank $k-1$, while
$F$ is a real vector bundle of rank $2$.  The corank of $\cH\leq TM$
is $(k-1)^2$. The eligible metrics are sums of the symmetric bilinear forms on
$\cH_1$ and $\cH_2$, both of the form of tensor product of two
exterior forms.
\item
$B\simeq\dyn\root{2}\dots\root{}\link\noroot{}\link\noroot{}\link\root{}
\dots\root{}\edyn\oplus
\dyn\root{}\dots\root{}\link\noroot{}\link\noroot{}\link\root{}
\dots\root{2}\edyn$. Here $\g=\sgl(2k+1)$, 
the horizontal distribution is the sum of two
vector bundles of the same rank $k$, corresponding to the defining 
representations of the two semisimple components in $\p_0$. The
metrics are sums of metrics on these two parts of $\cH$.
\end{numlist}
\end{thm}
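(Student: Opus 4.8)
The plan is to verify the ALC for each of the four triples by the same Dynkin diagram bookkeeping used in the proof of Theorem~\ref{main}. For each case I would read off the $\p_0$-module splittings $\h\otimes\C=\h_1\oplus\h_2$ and $B\otimes\C=B_1\oplus B_2$ from the displayed diagrams, check that each $B_i$ is irreducible and contains a nondegenerate element of $S^2\h_i$, and then confirm that for every $\alpha\in\Sig_0$ and every irreducible component $W$ of $B_i\otimes\C$ the module $(V_\alpha\otimes W)\cap(\ker c\otimes\C)$ is irreducible or zero. Once the ALC is established, the descriptions of the eligible metrics follow by inspecting the surviving nondegenerate elements of $S^2\h_i$, exactly as in \S\ref{s:class}; since the two summands of $\h$ are here non-isomorphic real $\p_0$-modules, $\h$ is genuinely reducible and the metrics are sums of metrics on the two parts.

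Cases \textup{(ii)}, \textup{(iii)} and \textup{(iv)} require no new computation: they are the split real forms of the type-$A$ geometries $A^{2,h}_\rk$, $A^{2,s}_{2k}$ and $A^{2,s}_{2k+1}$ of Table~\ref{t:red}, with the same Dynkin diagrams and the same choices of $B$, so the complexified modules $\h\otimes\C$, $B\otimes\C$ and the decompositions of the products $\h^*\otimes B_i$ are precisely the ones already worked out in the proof of Theorem~\ref{main} for type $A_\rk$. The only difference over the split form $\sgl(\rk+1,\R)$ is that the two diagram components of $\h$, and of $B$, are genuinely non-isomorphic rather than complex-conjugate, so $\h=\h_1\oplus\h_2$ and $B=B_1\oplus B_2$ are bona fide direct sums with $B_i\sub S^2\h_i$; as the ALC only sees the complexifications it continues to hold, now with respect to this honest splitting, and for these three cases one simply cites the relevant paragraphs of \S\ref{s:class}.

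The only genuinely new check is case \textup{(i)}, where $\g$ is the split form of $F_4$ in its $|6|$-grading, $\p_0$ has semisimple part $\sgl(2,\R)\oplus\sgl(2,\R)$, $\h=\h_1\oplus\h_2$ with $\h_1\cong E\otimes F$ of rank four and $\h_2\cong F$ of rank two, and $B=B_1\oplus B_2$ with $B_1\sub S^2\h_1$ the one-dimensional module $\Wedge^2 E\otimes\Wedge^2 F$ and $B_2=S^2\h_2$ irreducible of dimension three. The one-dimensional summand $B_1$ causes no trouble: its intersection condition is automatic, since $V_\alpha\otimes(B_1\otimes\C)\cong V_\alpha$ is irreducible and hence meets $\ker c$ in an irreducible module or in zero, and one only has to note that the generator of $\Wedge^2 E\otimes\Wedge^2 F$ is a nondegenerate element of $S^2\h_1$ (so that, by Remark~\ref{r:ALC}, $B=B_1\oplus B_2$ has nondegenerate elements). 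For $B_2=S^2\h_2$ the required intersections must be computed directly, for both elements of $\Sig_0$: the case where $V_\alpha$ pairs nontrivially with $\h_2$ is the $\sgl(2)$ ``projective'' pattern, in which $\h_2^*\otimes S^2\h_2$ is the sum of $\h_2$ with one further Cartan component, the latter lying in $\ker c$; the remaining case requires identifying, inside this $|6|$-graded structure, the $\p_0$-module $V_\alpha\otimes B_2$ and its intersection with $\ker c$. This last point is where the main difficulty lies: the length-six grading of $F_4$ makes the internal structure of $\g/\p$, hence of $\h$, $\h^*$ and $\ker c$, considerably more involved than in the $|1|$-graded and $|2|$-graded type-$A$ examples, and one must check with some care that no relevant intersection $(V_\alpha\otimes W)\cap\ker c$ breaks into two pieces. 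What keeps this tractable is that every tensor product in play is built from the two rank-two factors $E$ and $F$, so in the end the verification reduces to Clebsch--Gordan arithmetic for $\sgl(2,\R)$, exactly as in the projective example of \S\ref{ss:lp}.
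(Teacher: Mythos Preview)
Your overall strategy coincides with the paper's: for (ii)--(iv) you invoke the type~$A$ computations already done in the proof of Theorem~\ref{main} (the paper does exactly this, adding only that one must still check the top exterior powers $\Wedge^{m_i}\h_i$ have linearly independent central weights so that the rescaling step at the end of Theorem~\ref{alt} goes through), and for (i) you correctly isolate the $F_4$ case as the only new computation.

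Where you diverge from the paper is in the execution of~(i). The paper does not argue abstractly but works with the \emph{complete} weights, including the coefficients over the crossed nodes: using the inverse Cartan matrix it writes down $\h^*$, $\h$, and $B_1\oplus B_2$ with their full $\z(\p_0)$-weights, then computes the four irreducible summands of $B_2\otimes\h^*$ explicitly and reads off that the kernel of $b$ has the permitted shape. Your outline leaves precisely this step open (``the remaining case requires identifying~\dots'') and proposes to reduce it to $\sgl(2)$ Clebsch--Gordan. That reduction is correct in principle, but note that when $V_\alpha$ corresponds to $\h_1^*$, the entire product $\h_1^*\otimes B_2\cong(E^*\otimes F^*)\otimes S^2F$ lies in $\ker c$ (there is no contraction between $\h_1^*$ and $\h_2$), so the ``one must check\dots that no relevant intersection breaks into two pieces'' step is genuinely delicate and cannot be dismissed by the projective $\sgl(2)$ pattern alone; the paper simply lists the four pieces and asserts the count is acceptable, and you would need to do at least that much.

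Finally, the paper emphasizes that the strong ALC fails here (no diagram automorphism swaps the two crossed nodes of $F_4$), which is why it tracks central weights throughout and closes by showing that the weights of $\Wedge^4\h_1$ and $\Wedge^2\h_2$ suffice to express the weight of $\cL$. You do not address this point; strictly speaking the theorem only asserts the ALC, but the ``eligible metrics'' clauses implicitly rely on the linearized procedure completing, and that needs this final weight check in every case.
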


\begin{proof}
(i) Since the strong ALC cannot hold in the case of split forms of the
  algebras, we work with the complete weights. The form of $\h$ is seen from
  the Cartan matrix of type $F$, while the sum and difference of the second
  and last lines in the inverse Cartan matrix (which corresponds to the
  crossed nodes in the Dynkin diagram) provide the coefficients $(4~8~11~6)$
  and $(2~4~5~2)$ expressing two generating weights in the centre of
  $\p_0$. With their help, we find
\begin{gather*}
\h^* =
\dyn \root{1}\link\noroot{-2}\llink<\root{1}\link\noroot{0}\edyn,
\oplus 
\dyn \root{0}\link\noroot{0}\llink<\root{1}\link\noroot{-2}\edyn.
\\
\h =
\dyn \root{1}\link\noroot{-1}\llink<\root{1}\link\noroot{-1}\edyn
\oplus 
\dyn \root{0}\link\noroot{-2}\llink<\root{1}\link\noroot{1}\edyn.
\end{gather*}
The part of interest in $S^2\h$ is
\[
B_1\oplus B_2 = 
\dyn \root{0}\link\noroot{1}\llink<\root{0}\link\noroot{-1}\edyn \oplus
\dyn \root{0}\link\noroot{-5}\llink<\root{2}\link\noroot{2}\edyn.
\]
Now, $B_1$ is trivial, while 
\begin{gather*}
B_2\otimes\h^*\simeq
\dyn \root{1}\link\noroot{}\llink<\root{3}\link\noroot{}\edyn
\oplus 
\dyn \root{1}\link\noroot{}\llink<\root{1}\link\noroot{}\edyn
\oplus 
\dyn \root{}\link\noroot{}\llink<\root{3}\link\noroot{}\edyn
\oplus 
\dyn \root{}\link\noroot{}\llink<\root{1}\link\noroot{}\edyn.
\end{gather*}
Hence the kernel of $b$ does not exceed the allowed number of 
components and the ALC holds. Finally,
\[
\Lambda^4\h_1 = 
\dyn \root{0}\link\noroot{2}\llink<\root{0}\link\noroot{-2}\edyn
\qquad 
\Lambda^2\h_2=\dyn
\root{0}\link\noroot{-2}\llink<\root{0}\link\noroot{-3}\edyn
\]
so that the weight of $\cL$ can be expressed in terms of them and thus the
linearized metrizability procedure can be completed.

(ii)--(iv) All the other cases have been already discussed as the complex
versions of some cases in the previous section. The only remaining bit of the
proof is the check that the top exterior forms on the individual components
provide linearly independent weights and thus may be used to rescale the
metrics properly. This can be done exactly as in case (i).
\end{proof}

\begin{rem}
Actually, the arguments in the cases (iii) and (iv) above work also in any of
the situations where the crosses are either apart by one or next to each
other, i.e., without assuming they are placed symmetrically, except if the
adjacent crosses appear right at the ends of the diagram. In the latter case
of the so called paths geometries, one of the top degree forms on $\h_i$ has
trivial weight zero and thus the linearized metrizability procedure fails at
the stage when we change the weight of the solutions in order to get genuine
metrics.
\end{rem}

\end{document}